\documentclass[12pt,a4paper]{amsart}
\usepackage{amsfonts}
\usepackage{amsthm}
\usepackage{amsmath}
\usepackage{amscd}
\usepackage{t1enc}
\usepackage{indentfirst}
\usepackage{graphicx}
\usepackage{pict2e}
\usepackage{epic}
\numberwithin{equation}{section}
\usepackage[margin=2cm]{geometry}
\usepackage{epstopdf} 
\usepackage[utf8]{inputenc} %Türkçe karakterler
\usepackage[T1]{fontenc}
\usepackage{array}
\usepackage{subcaption}

\usepackage{comment} % delate this after we are done with all the changes

\usepackage{enumitem}

\usepackage{tikz}
\usetikzlibrary{graphs}
\usetikzlibrary{graphs.standard}

\usepackage[colorlinks = true,
linkcolor = blue,
urlcolor  = blue,
citecolor = blue,
anchorcolor = blue]{hyperref}

\numberwithin{equation}{section}

\usetikzlibrary{positioning}
\tikzset{main node/.style={circle,fill=white!20,draw,minimum size=1cm,inner sep=0pt},
	}	
\usetikzlibrary {graphs.standard}	

\theoremstyle{plain}
\newtheorem{Th}{Theorem}[section]
\newtheorem{Lemma}[Th]{Lemma}
\newtheorem{Cor}[Th]{Corollary}

\theoremstyle{definition}

\newtheorem{Def}[Th]{Definition}

\newtheorem{Conj}[Th]{Conjecture}

\newtheorem{?}[Th]{Problem}
\newtheorem{Ex}[Th]{Example}
\newcommand{\rk}{\operatorname {rk}}
\newtheorem*{Nt*}{Note}

\begin{document}

\setcounter{page}{1}
\vspace{2cm}

\author{  Gökçen Dilaver$^{1*}$ , Selma Altınok$^2$ }
\title{Basis Criteria for Extending Generalized Splines}

\thanks{\noindent $^1$  Hacettepe University, Graduate School of Science and Engineering,  Beytepe, Ankara, Turkey \\
	\indent \,\,\, e-mail: gokcen.dilaver@btu.edu.tr ; ORCID: https://orcid.org/0000-0001-6055-111X. \\
		\indent  $^*$ \, Corresponding author. \\
	\indent $^2$ Hacettepe University Department of Mathematics,  Beytepe, Ankara, Turkey\\
	\indent \,\,\,  e-mail: sbhupal@hacettepe.edu.tr ; ORCID: https://orcid.org/0000-0002-0782-1587. \\
}

	\begin{abstract}
Let $R$ be a commutative ring with identity and $G$ a graph.  
Extending generalized splines  are a further extension of generalized splines  
by allowing vertex labels of $G$ to lie in varying modules rather than in a fixed ring $R$.
	Geometrically, this corresponds to the construction of equivariant cohomology 
	by Braden and MacPherson (see \cite{BM}). 
	Therefore, characterizing such splines has immediate implications in geometry, 
	particularly in the computation of equivariant cohomology.
	
	In this paper, we study extending generalized splines  as a $R$- module in which each vertex $v$ is labeled by  \( M_v = m_v R \) and each edge $e$ is labeled by \( M_e = R/r_e R \)  together with quotient $R$-module homomorphisms $M_v\to M_e $ for each vertex $v$ incident to the edge $e$, where \( R \) is a greatest common divisor domain (GCD).  
	We characterize module bases of such splines in terms of determinants so that it provides a criterion for freeness of spline modules.

	\end{abstract}

	 \maketitle
	 
	 \textit{Keywords:} Generalized splines; module theory; determinants; matrix methods; graph theory. \\
	 
	 \textit{AMS Subject Classification:} 05C78, 05C25, 05C50, 11A07, 13C05.

\section{Introduction}

Classical spline theory concerns piecewise-defined functions on polyhedral complexes that are smooth up to a certain degree. These splines play a central role in approximation theory, geometric design, and data fitting. Their algebraic properties have been deeply studied by Billera \cite{BR91, BR92}, Rose \cite{Rose95, Rose04}, and Schenck \cite{Schenck}, among others.
In \cite{BR91}, Billera and Rose described classical splines using the dual graph of a polyhedral complex, which led to the development of generalized spline theory on arbitrary graphs $G$. This framework interprets splines as vertex labelings in a ring $R$ satisfying conditions along the edges of a graph $G$ and unifies ideas from combinatorics, commutative algebra, and linear algebra.

A further development, motivated by an open problem posed by Gilbert, Polster, and Tymoczko \cite{GPT}, is the theory of \emph{extending generalized splines}. This framework, formally introduced in \cite{DA}, generalizes the classical setting by allowing each vertex to be labeled not by elements of a fixed ring, but by elements of possibly different $R$-modules. More precisely, let 
$R$ be a commutative ring with identity, and let $G=(V,E)$ a vertex labeled graph with an $R$-module $M_v$ at each vertex $v$. An \emph{extending generalized spline} is a vertex labeling $f \in \prod_{v \in V} M_v$ such that for each edge $uv \in E$, there exists an $R$-module $M_{uv}$ and $R$-module homomorphisms $\varphi_u: M_u \to M_{uv}$ and $\varphi_v: M_v \to M_{uv}$ satisfying $\varphi_u(f_u) = \varphi_v(f_v)$. The collection of vertex labelings forms an $R$-submodule:
\[
\hat{R}_G = \{ f \in \prod_{v \in V} M_v \mid \varphi_u(f_u) = \varphi_v(f_v) \text{ for all } uv \in E \}.
\]
When each $M_v = R$ and each $\varphi_u$ is the canonical quotient map to $R/\beta(uv)$ for some ideal labeling $\beta$, the definition reduces to generalized splines.

Several authors have explored bases for spline modules in various algebraic settings. Handschy, Melnick, and Reinders~\cite{HMR} studied generalized splines over $\mathbb{Z}$ on cycles and introduced \emph{flow-up classes} to form a basis for the corresponding module. In~\cite{AS2019}, Altınok and Sarıoğlan proved the existence of flow-up bases for generalized spline modules on arbitrary graphs over principal ideal domains (PIDs).

Describing of splines in terms of determinants have played a central role in analyzing basis criteria for freeness of generalized spline modules, particularly in settings involving specific graph structures and ring conditions. In~\cite{Gjoni}, Gjoni studied integer-valued generalized splines on cycle graphs $C_n$ and derived basis criteria for generalized splines using determinants associated with flow-up classes over PIDs. Using a similar approach, Mahdavi~\cite{Mahdavi} examined generalized splines over $\mathbb{Z}$ on the diamond graph $D_{3,3}$ and obtained partial results for basis conditions under certain assumptions. However, their approaches do not extend to more general settings, as flow-up bases may fail to exist when the base ring $R$ is not a PID.

Altınok and Sarıoğlan~\cite{AS2021} broadened these results to greatest common divisor (GCD) domains and subsequently provided a complete proof of the basis criterion for generalized  spline module $R_{(D_{3,3},\beta)}$ over any GCD domain. They further extended their analysis to trees, cycles, and more general diamond graphs $D_{m,n}$. Building on these developments, Calta and Rose~\cite{RC} formulated basis conditions for generalized spline modules $R_{(G,\beta)}$ on arbitrary graphs over GCD domains, under specific constraints on edge labels. Most recently, Fişekçi and Sarıoğlan~\cite{FS} resolved this open problem by establishing a general determinantal basis criterion for generalized spline modules on arbitrary graphs over GCD domains.

In our earlier work~\cite{DA}, we establish foundational results for extending generalized splines by focusing on vertices \( v \) labeled by \( M_v = m_v \mathbb{Z} \) and edges \( e \) labeled by \( M_e = \mathbb{Z}/(r_e\mathbb{Z}) \), together with quotient $\mathbb Z$-module homomorphisms \( M_v \to M_e \) for each vertex \( v \) incident to the edge \( e \).  We developed an algorithm to construct a special basis, called \emph{flow-up classes}, for extending generalized spline modules on path graphs.  This path-based algorithm was further generalized to arbitrary graphs, enabling the construction of flow-up bases in more general settings.  

In~\cite{DAreduction}, we introduced a graph reduction technique involving vertex and edge operations to explicitly construct \(\mathbb{Z}\)-module bases for extending generalized splines.  
This reduction process induces a sequence of surjective homomorphisms between the corresponding spline modules, allowing the spline space to decompose as a direct sum of certain submodules.  
The results presented here extend more generally to modules of the form \( M_v = m_v R \), where \( R \) is a PID.

When $R$ is a PID, we proved  the existence of flow-up bases of extending generalized spline modules and showed that 
the rank is exactly equal to the number of vertices, $n$ in \cite{DA}. 
However, when $R$ is not a PID, the existence of such bases is no longer guaranteed.  
This paper addresses a central question in the theory of extending generalized splines:  
\emph{Under what conditions does a given collection of splines form a basis?} We answer this question by Theorem~\ref{main1} below for a GCD $R$.

If there exists a suitable element \( \hat{Q}_G \) such that the determinant of an  \( n \)-element subset of an extending generalized spline module \( \hat{R}_G \) is an associate of \( \hat{Q}_G \) in a GCD domain \( R \),  
then the following theorem gives a basis of rank $n$ for \( \hat{R}_G \)  on an arbitrary graph $G$.

\begin{Th}{\label {main1}}
	Let  \( (G,\beta) \) be an edge-labeled graph and $R$ a GCD.  
	Suppose \( F_1, F_2, \ldots, F_n \in \hat{R}_G \) are splines.  
	If the determinant 
	\(	\lvert F_1, F_2, \ldots, F_n \rvert = u \hat{Q}_G\)
	for some unit \( u \in R \),  
	then the set \( \{F_1, F_2, \ldots, F_n\} \) forms an \( R \)-module basis of \( \hat{R}_G \).
\end{Th}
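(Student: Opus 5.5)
The plan is to follow the determinantal strategy used for ordinary generalized splines over GCD domains (Altınok–Sarıoğlan, Fişekçi–Sarıoğlan). We regard each spline $F\in\hat R_G$ as a column vector in $R^n$: since $M_v=m_vR\subseteq R$ and $R$ is a domain, $\hat R_G$ embeds in $R^n$. The determinant $\lvert F_1,\dots,F_n\rvert$ is then the determinant of the $n\times n$ matrix with these columns. Two facts must be established, presumably as the paper's preliminary lemmas about $\hat Q_G$:
\begin{enumerate}
\item[(a)] $\hat Q_G$ divides the determinant of any $n$ splines in $\hat R_G$; in particular $\hat Q_G\neq 0$ and $\hat R_G$ has rank $n$.
\item[(b)] The stated hypothesis forces both linear independence and spanning.
\end{enumerate}

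\textbf{Linear independence.} Since $\hat Q_G$ is a nonzero product/lcm-type element built from the $m_v$ and the edge data, $u\hat Q_G\neq 0$. Hence the matrix $[F_1,\dots,F_n]$ is nonsingular over the fraction field $K$ of $R$, and the $F_i$ are $R$-linearly independent.

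\textbf{Spanning.} Take an arbitrary $F\in\hat R_G$. Over $K$ we can write $F=\sum c_iF_i$, and Cramer's rule gives
\[
c_i=\frac{\lvert F_1,\dots,F_{i-1},F,F_{i+1},\dots,F_n\rvert}{\lvert F_1,\dots,F_n\rvert}.
\]
The numerator is the determinant of $n$ splines in $\hat R_G$, so by (a) it is divisible by $\hat Q_G$. The denominator is $u\hat Q_G$. Therefore $c_i\in R$, so $F$ lies in the $R$-span of the $F_i$, and $\{F_1,\dots,F_n\}$ is a basis.

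\textbf{Main obstacle: proving (a).} One must show that every determinant of $n$ splines is divisible by $\hat Q_G$, where $\hat Q_G$ must be defined so that it is also attained up to a unit, making the theorem non-vacuous. The approach would be to choose an ordering $v_1,\dots,v_n$ of the vertices and row-reduce. Row $v_1$ of every spline is a multiple of $m_{v_1}$. For each later vertex $v_k$, subtract suitable combinations of earlier rows, using the edge conditions $F(u)-F(v)\in r_{uv}R$ (after identifying the quotient maps $m_uR\to R/r_{uv}R$). This shows that, modulo the earlier rows, the entries of row $k$ lie in an ideal generated by an element $Q_k$. Concretely, $Q_k$ would be an lcm of $m_{v_k}$ and gcd-type combinations of the $r_e$ along paths to earlier vertices, in the spirit of $Q_k=\operatorname{lcm}$ over relevant edge labels used in the GCD-domain results. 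Then $\hat Q_G=\prod_k Q_k$ divides the determinant by multilinearity in rows.

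The delicate point is that a GCD domain is not Bézout. Row reductions therefore cannot use ideal sums and must instead rely on divisibility arguments, namely that $a\mid bc$ and $\gcd(a,b)=1$ imply $a\mid c$, and on lcm properties. I would first try an induction on the number of vertices: delete $v_n$, apply the inductive divisibility to the minor, and expand the determinant along the last row. The hypothesis of the theorem, that the determinant is exactly $u\hat Q_G$, is used only in the Cramer step, so (a) is the real content.
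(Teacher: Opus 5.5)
\textbf{Verdict.} Your reduction is the right one, but step (a) is a genuine gap that your sketch does not close; the paper does not close it either for general GCD domains. Once every determinant of $n$ splines is known to be divisible by $\hat Q_G$, Cramer's rule gives $c_i\in R$ and the theorem follows.

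\textbf{The paper's proof also needs (a).} In its chain of equalities, $\lvert \hat Q_G F_1,F_2,\dots,F_n\rvert$ should read $\lvert \hat Q_G F,F_2,\dots,F_n\rvert$. Its conclusion that each $x_i$ is a unit multiple of $\hat Q_G$ would write every $F$ (e.g.\ $F=0$) as a combination of the $F_i$ with unit coefficients. Corrected, it gives $x_1=u^{-1}\lvert F,F_2,\dots,F_n\rvert$ and needs $\hat Q_G\mid x_1$. That is exactly your (a), which the paper establishes only under pairwise coprimality (Theorem~\ref{arbitrary-divides}).

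\textbf{Why your sketch for (a) fails.}
First, your $Q_k$ only uses paths to earlier vertices. But $\mathcal{Q}^{(k)}$ also contains $(m_j,[\{(P_{jk})\}])$ for $j>k$. For $P_2$ with $m_1=x$, $m_2=y$ and edge label $y$, one has $\mathcal{Q}^{(1)}=xy\neq m_1$. So as described, you would at best prove divisibility by a proper divisor of $\hat Q_G$, which is too weak for the Cramer step. These factors arise entrywise, not from row operations.

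Second, because $R$ is not B\'ezout, no single reduction puts the $v_k$-row into $\mathcal{Q}^{(k)}R$. Take $P_2$ over $\mathbb{Q}[x,y]$ with $m_1=1$, $m_2=x$ and edge label $y$. Then $\hat Q_G=xy$, and the splines $(x,x),(y,0)$ have determinant $xy$. Yet $(x,0)-c\,(x,y)\in xyR^2$ would force $x\mid c$ and $y\mid 1-c$, i.e.\ $1\in xR+yR$, which is false.

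Your fallback induction fails for a similar reason. Expanding along the $v_n$-row uses only divisibility of the individual entries $f_{v_n}$, so it never produces the difference factors $[\{(P_{sn})\}]$. The minors only carry $\hat Q_{G-v_n}$, which ignores trails through $v_n$.

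\textbf{The missing idea: take an lcm over many reductions instead of looking for one.}
Let $\rho_k$ be the $v_k$-row. For each choice $t_k\in\{\emptyset\}\cup\{s: s<k\}$, replace $\rho_k$ by $\rho_k$ (if $t_k=\emptyset$) or by $\rho_k-\rho_{t_k}$. Done for all $k$ at once, this is a unitriangular change of rows, so the determinant $D$ is unchanged.

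Entrywise, $f_{v_k}$ is divisible by $m_k$. Writing $f_{v_k}=f_{v_j}+(f_{v_k}-f_{v_j})$, telescoping along each trail and using Lemma~\ref{lcmgcd}(1), it is also divisible by $(m_j,[\{(P_{jk})\}])$ for every $j\neq k$. Likewise $f_{v_k}-f_{v_s}$ is divisible by $[\{(P_{sk})\}]$.

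Set $c_k^{(\emptyset)}=[m_k,\{(m_j,[\{(P_{jk})\}])\}_{j\neq k}]$ and $c_k^{(s)}=[\{(P_{sk})\}]$. By multilinearity, $D$ is divisible by $\prod_k c_k^{(t_k)}$ for every choice. Multiplication distributes over lcm in a GCD domain ($[ab,ab',a'b,a'b']=[a,a'][b,b']$), so
\[
\operatorname{lcm}_{(t_1,\dots,t_n)}\ \prod_{k=1}^n c_k^{(t_k)}=\prod_{k=1}^n\big[c_k^{(\emptyset)},\{c_k^{(s)}\}_{s<k}\big]=\prod_{k=1}^n\mathcal{Q}^{(k)}=\hat Q_G .
\]
Here the terms with $j<k$ are absorbed by $[\{(P_{jk})\}]$. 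Hence $\hat Q_G\mid D$ for any $n$ splines, and with this your Cramer argument is complete.
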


The converse of the above theorem presents significant challenges for arbitrary graphs.  
To address this, we assume the existence of a basis with certain constraints on edge and vertex labels to prove the following theorem.

\begin{Th}
	Let \( (G,\beta) \) be an edge-labeled graph over a GCD $R$ with vertex labels  
	\( m_1, m_2, \ldots, m_n \)  
	and edge labels  
	\( r_1, r_2, \ldots, r_k \),  
	where all \( m_i \) and \( r_j \) are pairwise relatively prime.  
	If  the module \( \hat{R}_G \) is free with basis  
	\( \{ F_1, F_2, \dots, F_n \} \),  
	then 
	\[
	\lvert F_1, F_2, \ldots, F_n \rvert  = u \, \hat{Q}_G
	\]
	for some unit \( u \in R \).
\end{Th}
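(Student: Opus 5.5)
The plan is to squeeze $\lvert F_1,\dots,F_n\rvert$ between two divisibility relations. First, $\hat{Q}_G$ divides the determinant of any $n$ splines. Second, the determinant of a basis divides $\hat{Q}_G$, which I will get by exhibiting $n$ explicit splines whose determinant is an associate of $\hat{Q}_G$. Since $R$ is a domain, two nonzero elements that divide each other are associates, and this gives $\lvert F_1,\dots,F_n\rvert = u\,\hat{Q}_G$ for a unit $u$.

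\textbf{Step 1 (lower bound).} I will show that $\hat{Q}_G$ divides $\lvert G_1,\dots,G_n\rvert$ for every $G_1,\dots,G_n\in\hat{R}_G$. I expect this divisibility to be the ingredient already used for Theorem~\ref{main1}, so I will take it from that proof if possible. Otherwise I will argue directly.
\begin{itemize}
\item Row $v$ of the matrix consists of entries in $m_vR$, so $m_v$ can be factored out of each row.
\item For each edge $e=uv$, the edge condition says that rows $u$ and $v$ agree modulo $r_e$ after the quotient maps. Subtracting one row from the other therefore produces a factor $r_e$ in the determinant.
\item Because the $m_i$ and $r_j$ are pairwise relatively prime, these separate divisibilities combine in a GCD domain: if $a\mid x$, $b\mid x$ and $\gcd(a,b)=1$, then $ab\mid x$. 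This yields the product $\hat{Q}_G$.
\end{itemize}
The delicate point in this step is handling edges that share vertices without double-counting. Coprimality is exactly what makes each factor appear independently, so the lcm of the factors equals their product.

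\textbf{Step 2 (explicit splines).} Fix an ordering $v_1,\dots,v_n$ of the vertices, chosen compatibly with a spanning tree. For each $i$, I will build a spline $H_i$ that vanishes on $v_1,\dots,v_{i-1}$ and has a prescribed leading entry $d_i$ at $v_i$, so that the matrix $(H_1,\dots,H_n)$ is triangular.
\begin{itemize}
\item For $i=1$, take the constant-type spline that is $\prod_v m_v$ at every vertex. Because the labels are coprime, the lcm of the $m_v$ is their product.
\item For $i>1$, take $d_i = m_{v_i}\prod r_e$, where $e$ runs over the edges joining $v_i$ to later vertices, or an analogous choice. Set $H_i$ to $d_i$, suitably multiplied by the remaining $m_v$, on $v_i$ and its successors, and $0$ before $v_i$. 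The edge conditions then hold automatically, since the entries are divisible by the relevant $r_e$ and $m_v$.
\end{itemize}
I will tune the $d_i$ so that $\prod_i d_i$ is an associate of $\hat{Q}_G$. The determinant is then $\lvert H_1,\dots,H_n\rvert=\prod d_i$.

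\textbf{Step 3 (conclude).} Since $\{F_i\}$ is a basis, each $H_j=\sum_i a_{ij}F_i$ for some matrix $A=(a_{ij})$ over $R$. Hence $\lvert H_1,\dots,H_n\rvert=\det A\cdot\lvert F_1,\dots,F_n\rvert$, so $\lvert F\rvert$ divides $\hat{Q}_G$. Combined with Step 1, $\lvert F\rvert$ and $\hat{Q}_G$ divide each other. Finally, $\hat{Q}_G\neq 0$, so the two are associates.

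I expect the main obstacle to be Step 2. The splines $H_i$ must genuinely lie in $\hat{R}_G$, and their diagonal product must match $\hat{Q}_G$ exactly rather than a proper multiple. If $\hat{Q}_G$ counts each $r_e$ only once, as for generalized splines on trees and cycles, then each edge must be charged to exactly one vertex. This will require care when $G$ has cycles, where closing a cycle in the ordering may force an extra edge factor. My first attempt will be to charge each edge to its later endpoint in the ordering and to use coprimality to absorb the cycle-closing constraints. Failing that, I will reduce to the spanning tree and check the remaining edges separately.
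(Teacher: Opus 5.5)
Your Steps 1 and 3 are sound and match what the paper uses (Theorem~\ref{arbitrary-divides} and Lemma~\ref{combination}). Step 2, however, cannot be carried out over a general GCD domain, and more tuning will not fix it.

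Step 2 never uses freeness. Yet by Step 1 and Cramer's rule, any $n$ splines whose determinant is an associate of $\hat{Q}_G$ automatically span $\hat{R}_G$. So your construction would show that $\hat{R}_G$ is always free in the coprime case, which is false. Take $P_2$ over $k[x,y,z]$ ($k$ a field) with $m_1=x$, $m_2=y$, $r=z$. The splines are generated by $(xz,0)$, $(0,yz)$ and $(xy,xy)$. Hence every determinant of two splines lies in $xyz\langle x,y,z\rangle$, which contains no associate of $\hat{Q}_G=xyz$.

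The triangular shape fails even when the module is free, because coprime labels need not be comaximal, so the leading entries you want are not attainable. Take $k[x,y]$ with $m_1=x$, $m_2=y$, $r=x+y+1$. The $v_1$-entries of splines form $x\langle y,x+1\rangle$. The $v_2$-entries of splines vanishing at $v_1$ form $yr\,k[x,y]$. So every triangular pair has determinant in $xyr\langle y,x+1\rangle$, and symmetrically for the other vertex order. Nevertheless $(xy,-y^2-y)$, $(xr,-yr)$ is a basis with determinant $\pm xyr$. Your concrete $H_1$, with entry $\prod_v m_v$ at $v_1$, already overshoots by $\prod_{v\neq v_1}m_v$. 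The obstruction has nothing to do with cycles.

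The missing idea is to test the basis against \emph{many} easy families and pass to a gcd, rather than realize $\hat{Q}_G$ by a single family. Write $\lvert F_1,\dots,F_n\rvert=u\hat{Q}_G$ using Step 1. For each label $l\in\{m_1,\dots,m_n,r_1,\dots,r_k\}$, let $\hat{l}$ be the product of all the other labels.

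For $l=m_i$, take $\hat{Q}_G$ times the indicator of $v_i$, together with $\hat{l}$ times the indicator of $v_j$ for each $j\neq i$. For $l=r_e$ with $e=v_av_b$, take $\hat{l}$ times the indicator of $\{v_a,v_b\}$, $\hat{Q}_G$ times the indicator of $v_b$, and $\hat{l}$ times the indicator of each other vertex. Each column is a spline, for four reasons:
\begin{itemize}
\item every label other than $l$ divides $\hat{l}$;
\item every label divides $\hat{Q}_G$;
\item in the vertex case $\hat{l}$ never sits at $v_i$;
\item in the edge case the only column with $\hat{l}$ at an end of $e$ takes the same value at both ends.
\end{itemize}
Each family has determinant $\pm\hat{l}^{\,n-1}\hat{Q}_G$.

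Your Step 3 then gives $u\mid\hat{l}^{\,n-1}$ for every label $l$. Since $l$ is coprime to $\hat{l}^{\,n-1}$, $u$ is coprime to every label. Hence $u$ is coprime to the product $\hat{l}^{\,n-1}$, which it divides, so $u$ is a unit. This is the paper's proof; it phrases the last step through $(\hat{l}_1,\dots,\hat{l}_{n+k})=1$.
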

Developing a comprehensive determinantal criterion for arbitrary graphs and rings 
remains an open and compelling direction for future research.

\begin{Conj}
	Let \( (G, \beta) \) be an edge-labeled graph over a GCD domain \( R \), 
	and let \( F_1, F_2, \ldots, F_n \in \hat{R}_{G} \) be splines.  
	If \( \{F_1, F_2, \dots, F_n\} \) forms an \( R \)-module basis of \( \hat{R}_{G} \),  
	then the determinant satisfies
	\[
	\lvert F_1, F_2, \ldots, F_n \rvert = u \hat{Q}_{G}
	\quad \text{for some unit } u \in R.
	\]
\end{Conj}

\begin{Conj}
	It would be of interest to obtain a general formula for \( \hat{Q}_{G} \).  
	In particular, before extending to arbitrary graphs, 
	one may seek explicit forms of \( \hat{Q}_{G} \) 
	for specific graph families such as trees, cycles, and diamond graphs.
\end{Conj}

%------------------------------------------------

%------------------------------------------------ 

\section{Background and Notations}

We use $P_n$ to denote a path graph with $n$ vertices, $C_n$ to denote a cycle graph with $n$ vertices and $T_n$  to denote a tree graph with $n$ vertices. Throughout the paper, we write $( \quad )$
for the greatest common divisor and $[\quad ]$ for the least common multiple.
\begin{Def}
	
	Let $R$ be a ring and $G = (V,E)$ a graph. An edge-labeling function is a map $ \beta : E \to \{\text{R-modules}\}$ which assigns an $R$-module $M_{uv}$  to each edge $uv$. We call the pair $(G, \beta)$ as an \emph{edge-labeled graph}.
	
\end{Def}

\begin{Def}

	Let $R$ be a ring and $(G,\beta)$ an edge-labeled graph. \emph{An extending generalized spline} on $(G,\beta)$ is a vertex labeling $f \in \prod_{v} M_v$ by an $R$-module $M_v$ at each vertex $v$ such that for each edge $uv$, there exists an $R$-module $M_{uv}$  together with $R$-module homomorphisms  $ \varphi_u : M_u \to M_{uv}$ and  $ \varphi_v : M_v \to M_{uv}$ satisfying the condition  $\varphi_u(f_u)=\varphi_v(f_v).$ The set of all such vertex labelings forms the module of extending generalized splines, denoted by ${\hat R}_{G}$:
	\begin{equation*}
		{\hat R}_{G}= \{f \in \prod_{v} M_v \mid \text{for each edge}\: uv, \: \varphi_u(f_u)=\varphi_v(f_v)\}.
	\end{equation*}
	This set $\hat{R}_G$ naturally carries the structure of an $R$-module.
	For elements of $\hat{R}_G $ we use either a column matrix notation with ordering from bottom to top as  	
	$$	F =
	\left( \begin{array}{c}
		f_{v_n} \\
		\vdots\\
		f_{v_1}
	\end{array} \right) \in \hat{R}_G
	$$
	\raggedright or  a vector notation as $	F= (f_{v_1},\ldots,f_{v_n}).$ 	
	
	For the sake of simplicity,  we refer to extending generalized splines as splines.

\end{Def}

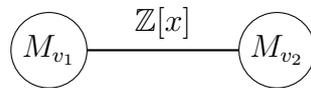
\begin{figure}[h!]
	\centering
	\begin{tikzpicture}[node distance=3cm, on grid, auto]
		\node[main node] (1) {$M_{v_1}$};
		\node[main node] (2) [right of=1] {$M_{v_2}$};
		\path[draw, thick]
		(1) edge node[above] {$\mathbb{Z}[x]$} (2);
	\end{tikzpicture}
	\caption{An edge-labeled path graph $P_2$.}
	\label{extending}
\end{figure}

\begin{Ex}
	Let $P_2$ be a path graph with modules 
	$M_{v_1} = 2\mathbb{Z}[x]$ and $M_{v_2} = 3\mathbb{Z}[x]$ 
	at the vertices $v_1$ and $v_2$, and with 
	$M_{v_1v_2} = \mathbb{Z}[x]$ on the edge, 
	as shown in Figure~\ref{extending}. 
	
	A spline on $P_2$ is a vertex labeling together with 
	identity $\mathbb{Z}[x]$-module homomorphisms:
	\[
	\begin{array}{rclcrcl}
		\varphi_{v_1} & : & 2\mathbb{Z}[x] & \longrightarrow & \mathbb{Z}[x], & \quad & 2f_{v_1} \longmapsto 2f_{v_1}, \\[4pt]
		\varphi_{v_2} & : & 3\mathbb{Z}[x] & \longrightarrow & \mathbb{Z}[x], & \quad & 3f_{v_2} \longmapsto 3f_{v_2},
	\end{array}
	\]
 such that 
	\(\varphi_{v_1}(2f_{v_1}) = \varphi_{v_2}(3f_{v_2})\).
	Then we easily observe that 
	\(
	\hat{R}_G = \langle (6,6) \rangle
	\)
	is a free $\mathbb{Z}[x]$-module.
\end{Ex}

\begin{Def}
	A spline $F = (f_{v_1}, \ldots, f_{v_n}) \in \prod_{v_i} M_{v_i}$ is called \emph{non-trivial} if $f_{v_j} \neq 0$ for at least one $j$.  
	The zero labeling $(0,\ldots,0)$ is referred to as a trivial spline.
\end{Def}

\begin{Def}
	Let  $(G,\beta)$  be an edge-labeled graph with an $R$-module $M_v$ assigned to each vertex $v$ and $\beta(uv)= M_{uv}$ to each edge $uv$. An \emph{$i$-th flow-up class} $F^{(i)}$ over $(G,\beta)$ with $1\le i \le n $ 
	is a spline for which the component $f_{v_i}^{(i)} \neq 0 $ and $f_{v_s}^{(i)} = 0 $ whenever $s < i $. The set of all $i$-th  flow-up classes is denoted by $\hat{\mathcal{F}}_i$.
\end{Def}

We include the following elementary yet useful lemma, which will play an important role in simplifying expressions involving least common multiples and greatest common divisors. As its proof is straightforward, we omit it here.

\begin{Lemma}\label{lcmgcd}
	Let $a_1, \dots, a_n, b$ be nonzero elements in a GCD domain. 
	Then the following equalities hold:
	\begin{enumerate}
		\item 
	\(	([a_1, \ldots, a_n], b)
	= 
	[(a_1, b), (a_2, b), \ldots, (a_n, b)]. \)
		
		\item 
	\(	[(a_1, \ldots, a_n), b]
	= 
	([a_1, b], [a_2, b], \ldots, [a_n, b]).
	 \)
		
		\item 
		The least common multiple of three elements satisfies
		\[
		[a_1, a_2, a_3]
		=
		\frac{a_1 a_2 a_3 \, (a_1, a_2, a_3)}
		{(a_1, a_2)\, (a_1, a_3)\, (a_2, a_3)}.
		\]
	
		\item 
		Let $\widehat{a_i}$ denote 
	\(\widehat{a_i} = a_1 a_2 \cdots a_{i-1} a_{i+1} \cdots a_n.\)
		Then the greatest common divisor of the $\widehat{a_i}$ satisfies
		\[
		(\widehat{a_1}, \widehat{a_2}, \dots, \widehat{a_n}) 
		= 
		\frac{a_1 a_2 \cdots a_n}{[a_1, a_2, \dots, a_n]}.
		\]
	\end{enumerate}

\end{Lemma}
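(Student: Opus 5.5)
We need to prove Lemma \ref{lcmgcd}, the four gcd/lcm identities in a GCD domain. I will argue \emph{valuation-wise}, in the way one proves such identities over a UFD, and then note how to make this legitimate in an arbitrary GCD domain, where elements need not factor into primes.

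The key tool is the following reduction. In a GCD domain, gcds and lcms of finitely many nonzero elements exist and are unique up to units. Divisibility is a lattice order on the monoid of nonzero elements modulo units, with gcd as meet and lcm as join. This lattice is distributive: standard GCD-domain facts such as $(ab,ac)=a(b,c)$ and $(a,b)[a,b]\sim ab$ give modularity and then distributivity. With that in hand, (1) and (2) are exactly the two distributive laws for finitely many elements. Concretely, for (1) I would:
\begin{enumerate}
\item prove $([a_1,a_2],b)=[(a_1,b),(a_2,b)]$ directly;
\item induct on $n$ using associativity of $[\ ,\ ]$, writing $[a_1,\dots,a_n]=[[a_1,\dots,a_{n-1}],a_n]$.
\end{enumerate}
The two-element case can be handled with the ``coprime cofactor'' trick. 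Set $d=(a_1,a_2)$ and $a_i=d a_i'$ with $(a_1',a_2')=1$, so that $[a_1,a_2]=d a_1'a_2'$. Then use the GCD-domain facts that $(x,y)=1$ implies $(x,yz)=(x,z)$ and $(xy,z)=(x,z)(y,z)$. Identity (2) follows dually, or by applying (1) together with $(x,y)[x,y]\sim xy$.

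Identity (3) is handled the same way, via the product formulas. First, $[a_1,a_2]=a_1a_2/(a_1,a_2)$. Then
\[
[a_1,a_2,a_3]=\frac{[a_1,a_2]\,a_3}{([a_1,a_2],a_3)},
\qquad
([a_1,a_2],a_3)=[(a_1,a_3),(a_2,a_3)]=\frac{(a_1,a_3)(a_2,a_3)}{(a_1,a_2,a_3)}.
\]
The middle equality uses (1). The last equality uses the two-element product formula together with $((a_1,a_3),(a_2,a_3))=(a_1,a_2,a_3)$. Substituting these expressions gives the claimed formula.

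For (4), let $P=a_1\cdots a_n$, so that $\widehat{a_i}=P/a_i$. In a GCD domain, $(P/a_1,\dots,P/a_n)$ is the largest $d$ dividing every $P/a_i$. Now $d\mid P/a_i$ holds iff $a_i d\mid P$, i.e.\ iff $a_i\mid P/d$ (valid provided $d\mid P$, which holds). Hence $d$ divides all $\widehat{a_i}$ iff $[a_1,\dots,a_n]\mid P/d$, iff $d\mid P/[a_1,\dots,a_n]$. Taking the largest such $d$ gives (4). This argument uses only the defining property of lcm, so it is clean.

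The main obstacle is (1) and (2), that is, rigorously establishing distributivity without prime factorization. I would handle it via the two-element coprime-cofactor computation described above. Everything else reduces to (1) and to the identity $(x,y)[x,y]\sim xy$, both standard in GCD domains.
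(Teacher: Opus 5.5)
The paper gives no proof of this lemma (it calls it straightforward and omits it), so your argument supplies one rather than competing with one. Its overall structure is sound.

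\textbf{Parts (2)--(4).} Your proof of (4) is the cleanest part. It uses only the universal property of $[a_1,\dots,a_n]$ and cancellation in a domain, so it even shows that the gcd of the $\widehat{a_i}$ exists whenever the lcm does. Part (3) correctly reduces to the two-element case of (1), together with $[x,y](x,y)\sim xy$ and $((a_1,a_3),(a_2,a_3))\sim(a_1,a_2,a_3)$. For (2) you do not need a separate dual computation. For two elements, applying (1) twice gives $([a_1,b],[a_2,b])=[(a_1,[a_2,b]),b]=[(a_1,a_2),(a_1,b),b]=[(a_1,a_2),b]$, and the induction on $n$ then runs exactly as for (1).

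\textbf{The step that does not close.} This is the two-element case of (1), which you rightly identify as the crux. After writing $a_i=da_i'$ with $(a_1',a_2')=1$, the two coprimality facts you quote cannot yet be applied to $(da_1'a_2',b)$. The reason is that $d$ need not be coprime to $a_1'$, $a_2'$ or $b$ (take $a_1=4$, $a_2=2$ in $\mathbb{Z}$). You must also normalize $b$ against $d$:
\begin{itemize}
\item put $e=(d,b)$ and $b=eb'$, so that $(d/e,b')=1$;
\item your facts then give $(da_1',b)=e(a_1',b')$, $(da_2',b)=e(a_2',b')$ and $(da_1'a_2',b)=e(a_1',b')(a_2',b')$;
\item since $(a_1',b')$ and $(a_2',b')$ are coprime, their lcm is their product, and homogeneity of lcm yields $[(da_1',b),(da_2',b)]=e(a_1',b')(a_2',b')$.
\end{itemize}

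A cofactor-free alternative also works. Let $L=[a_1,a_2]$ and use the GCD-domain identity $(x,y)(z,w)=(xz,xw,yz,yw)$, together with $Ld\sim a_1a_2$ and $db\mid Lb$. This gives $(L,b)(d,b)=(a_1a_2,Lb,db,b^2)=(a_1a_2,db,b^2)=(a_1,b)(a_2,b)$. Dividing by $(d,b)=((a_1,b),(a_2,b))$ gives $(L,b)=[(a_1,b),(a_2,b)]$.

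Your other suggested route, distributivity of the divisibility lattice, is also valid. It is most directly justified by cancellation: $(x,y)\sim(x,z)$ and $[x,y]\sim[x,z]$ force $xy\sim xz$, hence $y\sim z$. With either fix in place, all four identities hold up to units as claimed.
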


\section{Determinant-Based Conditions on Edge-Labeled Graphs}

In this section, we consider an edge-labeled graph \((G, \beta)\) over a GCD $R$
in which each vertex \(v_i\) is assigned a label \(m_i\), corresponding to the module 
\(M_{v_i} = m_i R\) and  each edge \(e\) is assigned a label $r_e$, corrosponding to the module 
\(M_{e} = R / r_{e} R\)
for convenience.

\begin{Def}
	Let $(G,\beta)$ be an edge-labeled graph with $n$ vertices, and let $A = \{F_1, \ldots, F_n\} \subset \hat{R}_G$, where each $F_i = (f_{i1}, \ldots, f_{in})$. We represent this set in matrix form by arranging the splines as columns:
	\[
	A = \begin{bmatrix}
		f_{1n} & f_{2n} & \cdots & f_{nn} \\
		\vdots & \vdots & & \vdots \\
		f_{12} & f_{22} & \cdots & f_{n2} \\
		f_{11} & f_{21} & \cdots & f_{n1}
	\end{bmatrix}.
	\]
	We denote the determinant of this matrix by
	$
	\vert A \vert = \vert F_1, F_2, \dots, F_n \vert.$
	
\end{Def}

Throughout the remainder of this paper, we concentrate on developing determinant-based criteria for identifying bases of spline modules.  A main tool of this criteria is to provide a construction of $\hat Q_G \in R$ 
as follows:

\begin{Def}Let $(G, \beta)$ be an edge-labeled graph.  
A \emph{trail} from $v_j$ to $v_i$ is an ordered sequence
$P_{ji} = (v_j\, e_{j-1}\, v_{j-1}\, \cdots\, v_{i+1}\, e_i\, v_i)$ of vertices and edges in which no edge is repeated.  
Using the corresponding vertex and edge labels, we may write
\(P_{ji} = (m_j\, r_{j-1}\, m_{j-1}\, \cdots\, m_{i+1}\, r_i\, m_i).\)
Let $\mathcal{P}^i$ denote the set of all longest trails ending at $v_i$ (under strict inclusion).  For a trail $P_{ji}$ from $v_j$ to $v_i$, let $(P_{ji})$ denote the greatest common divisor of its edge labels,  
and let $\{(P_{ji})\}$ be the set of such gcds for all trails $P_{ji}$.
Following the notations in \cite{DA}, we define a \emph{key element} $\hat{Q}_G$ by
\[
\hat{Q}_G = \prod_{i=1}^n \mathcal{Q}^{(i)},
\]
where
\[
\mathcal{Q}^{(i)} =
\left[ m_i,\ \left\{ \left( m_j,\ \big[ \{ (P_{ji}) \} \big] \right) \;\mid\; j > i \right\},\  \{\big[ \{ (P_{si}) \} \big]  \;\mid\;  s<i \}  \right].
\]
\end{Def}
\begin{figure}[h!]
	
	\begin{center}
		\begin{tikzpicture}
			\node[main node] (1) {$m_1$};
			\node[main node] (2) [below left = 3cm of 1]  {$m_3 $};
			\node[main node] (3) [below right = 3cm of 1] {$m_2 $};
			
			\path[draw,thick]
			(1) edge node [left]{$ r_3 $} (2)
			(2) edge node [below]{$ r_2 $} (3)
			(3) edge node [right]{$ r_1 $} (1);

		\end{tikzpicture}
	\end{center}
	
	\caption{An edge-labeled cycle graph $(C_3,\beta)$} \label{c3}
\end{figure}
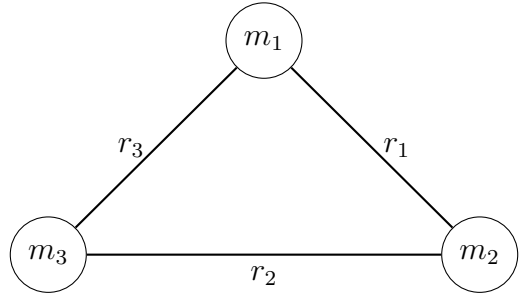

\begin{Ex}
	Let $(C_3,\beta)$ be a cycle graph with edge labels, as illustrated in Figure~\ref{c3}. By definition, we have
	\begin{align*}
		\mathcal{Q}^{1}= & [m_1,(m_2,[r_1,(r_2,r_3)]),(m_3,[r_3,(r_1,r_2)])],\\
			\mathcal{Q}^{2} = & [m_2,(m_3,[r_2,(r_1,r_3)]),[r_1,(r_2,r_3)]], \\
	\mathcal{Q}^{3} = & [m_3,r_2,r_3] 
	\end{align*}	
so that $\hat{Q}_G=\mathcal{Q}^{1}\mathcal{Q}^{2}\mathcal{Q}^{3}$.
\end{Ex}

\begin{Lemma} \label{spangraph}
	Let $(G,\beta)$ be an edge-labeled graph, and let  
	$F_1, F_2, \ldots, F_n, F \in \hat{R}_{G}$ splines with  
	\(\lvert F_1, F_2, \ldots, F_n \rvert = \hat{Q}_G\).  
	Then \( \hat{Q}_{G} F \) lies in the span of  
	\( \{F_1, F_2, \ldots, F_n\} \).
\end{Lemma}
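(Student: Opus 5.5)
The approach is Cramer's rule. Let $A$ be the matrix whose columns are $F_1,\dots,F_n$, so that $|A|=\hat{Q}_G\neq 0$ (the key element is a product of least common multiples of nonzero elements, and $R$ is a domain). Working over the fraction field $K$ of $R$, the equation $A x = F$ has the unique solution $x_i = |F_1,\dots,F_{i-1},F,F_{i+1},\dots,F_n|/\hat{Q}_G$. It follows that
\[
\hat{Q}_G F = \sum_{i=1}^n |F_1,\dots,F_{i-1},F,F_{i+1},\dots,F_n|\, F_i ,
\]
which is the adjugate identity $|A|\,F = A\,\operatorname{adj}(A)F$. This identity already holds over $R$. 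Each coefficient $|F_1,\dots,F,\dots,F_n|$ is a determinant of a matrix with entries in $R$, so it lies in $R$. Hence $\hat{Q}_G F$ is an $R$-linear combination of $F_1,\dots,F_n$.

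The steps, in order, are:
\begin{enumerate}
\item Note that $\hat{Q}_G\neq 0$, so $A$ is invertible over $K$ and the expansion above makes sense.
\item Write the identity $|A|\,F=\sum_i |A_i(F)|\,F_i$, where $A_i(F)$ denotes $A$ with its $i$-th column replaced by $F$. This is verified entrywise by expanding $|A_i(F)|$ along column $i$; equivalently, it is $A\operatorname{adj}(A)=|A| I$ applied to $F$.
\item Observe that every coefficient $|A_i(F)|$ is in $R$.
\end{enumerate}

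The only potential obstacle is interpretive rather than technical: one might expect the lemma to require divisibility of these determinants by something, or to use the specific structure of $\hat{Q}_G$. In the direction stated, no such property is needed, because $\hat{Q}_G$ multiplies $F$ and exactly cancels the denominator from Cramer's rule. The genuinely hard input, presumably needed for Theorem~\ref{main1}, is showing that $\hat{Q}_G$ divides each $|A_i(F)|$. That fact is not required here.

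I would also remark that the hypothesis can be taken up to a unit $u$. If $|F_1,\dots,F_n| = u\hat{Q}_G$, the same identity gives $\hat{Q}_G F = u^{-1}\sum_i |A_i(F)| F_i$, which is again in the span.
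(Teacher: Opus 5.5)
Your proposal is correct and is essentially the paper's proof: the paper also applies Cramer's rule to $A x = \hat{Q}_G F$ and observes that the coefficients $x_i = |F_1,\dots,F_{i-1},F,F_{i+1},\dots,F_n|$ lie in $R$. The only difference is that the paper treats the linearly dependent case ($\hat{Q}_G=0$) separately, whereas your adjugate identity $|A|\,F=\sum_i |A_i(F)|\,F_i$ holds over $R$ unconditionally, so your step 1 and its justification that $\hat{Q}_G\neq 0$ are not actually needed.
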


\begin{proof}
If \( F_1, \ldots, F_n \) are linearly dependent, then  
\(\hat{Q}_G = \vert F_1,\ldots,F_n \vert =0\), and hence  
\(\hat{Q}_G F = 0\). So, it lies trivially in the span of  
\(\{F_1,\ldots,F_n\}\).
Now assume that \(F_1,\ldots,F_n\) are linearly independent and write  
\(F = (g_1,\ldots,g_n)\).  
We seek \(x_1,\ldots,x_n \in R\) such that  
\[
x_1 F_1 + \cdots + x_n F_n = \hat{Q}_G F.
\]
Let \(A\) be the matrix with columns \(F_1,\ldots,F_n\).  
Since \(\vert A \vert =\hat{Q}_G\), Cramer's rule applies to  
\[ \begin{pmatrix} f_{1n} & f_{2n} & \cdots & f_{nn} \\ \vdots & \vdots & & \vdots \\ f_{12} & f_{22} & \cdots & f_{n2} \\ f_{11} & f_{21} & \cdots & f_{n1} \end{pmatrix} \begin{pmatrix} x_1 \\ x_2 \\ \vdots \\ x_n \end{pmatrix} = \begin{pmatrix} \hat{Q}_{G} g_n \\ \vdots \\ \hat{Q}_{G} g_2 \\ \hat{Q}_{G} g_1 \end{pmatrix}. \]
 Let \(A_i\) be obtained from \(A\) by replacing its \(i\)-th column with \(\hat{Q}_{G}F\) for each $i$.  
Cramer's rule gives  
\[
x_i =  \frac{\begin{vmatrix} f_{1n} & \cdots &\hat{Q}_G g_n & \cdots & f_{nn} \\ \vdots & \vdots & & \vdots \\ f_{12} & \cdots &\hat{Q}_G g_2 & \cdots & f_{n2} \\ f_{11} & \cdots & \hat{Q}_G g_1 & \cdots & f_{n1} \end{vmatrix}}{\hat{Q}_G} = \begin{vmatrix} f_{1n} & \cdots & g_n & \cdots & f_{nn} \\ \vdots & \vdots & & \vdots \\ f_{12} & \cdots & g_2 & \cdots & f_{n2} \\ f_{11} & \cdots &  g_1 & \cdots & f_{n1} \end{vmatrix}.
\]
so that \(x_i\in R\).
Thus \( \hat{Q}_G F \) lies in the span of \(\{F_1,\ldots,F_n\}\).

\end{proof}

\begin{Lemma}\label{combination}
	
	Let $(G,\beta)$ be an edge-labeled  graph, and let $\{F_1,F_2, \dots, F_n\}$ form a basis for $\hat{R}_G$. If $H_1, H_2, \dots, H_n$ are  linear combinations of $F_1, F_2, \dots, F_n$, then $\vert F_1, F_2, \dots,F_n \vert $ divides $ \vert H_1, H_2,\dots, H_n \vert $.
	
\end{Lemma}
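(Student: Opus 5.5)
Since $\{F_1,\dots,F_n\}$ is a basis of $\hat{R}_G$ and each $H_j$ lies in the span of the $F_i$, I will write every $H_j$ in the basis and use multiplicativity of the determinant. Concretely, choose coefficients $c_{ij}\in R$ with
\[
H_j = \sum_{i=1}^n c_{ij} F_i, \qquad j=1,\dots,n.
\]
Let $A$ be the matrix whose columns are $F_1,\dots,F_n$ (in the paper's bottom-to-top ordering) and $B$ the matrix whose columns are $H_1,\dots,H_n$, ordered the same way. Put $C=(c_{ij})\in M_n(R)$. Column $j$ of $AC$ is $\sum_i c_{ij}F_i = H_j$, so $B=AC$.

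Since $R$ is commutative, $\det$ is multiplicative on $M_n(R)$. Hence
\[
\vert H_1,\dots,H_n\vert=\det(AC)=\det(A)\det(C)=\vert F_1,\dots,F_n\vert\cdot\det(C).
\]
As $\det(C)\in R$, this shows $\vert F_1,\dots,F_n\vert$ divides $\vert H_1,\dots,H_n\vert$.

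There is no real obstacle. The only point needing care is the convention: the splines must be placed as columns of the matrices with the same vertex ordering, so that $B=AC$ holds exactly rather than up to a sign. Even a sign change would be harmless, since $-1$ is a unit. The basis hypothesis is used only to guarantee that the $c_{ij}$ exist; linear independence of the $F_i$ is not needed.
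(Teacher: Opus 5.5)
Your proof is correct and is essentially the standard argument the paper defers to (it cites Lemma 5.1.4 of Gjoni rather than proving the statement): write the $H_j$ as the columns of $AC$ and use multiplicativity of the determinant over the commutative ring $R$. One small correction to your closing remark: the coefficients $c_{ij}$ already exist because the $H_j$ are assumed to be linear combinations of the $F_i$, so the basis assumption is not used at all. It only matters in the paper's later application, where it guarantees that arbitrary splines lie in the span of the $F_i$.
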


\begin{proof}
	See Lemma 5.1.4 in \cite{Gjoni}. 
\end{proof}

\begin{Cor}\label{twobasis}
	Let $(G,\beta)$ be an edge-labeled graph. If $\{F_1,F_2, \dots,F_n\}$ is a basis for $\hat{R}_G$ and $\{H_1,H_2,\dots,H_n\}$ is another basis  for $\hat{R}_G$, then $\vert F_1, F_2, \dots,F_n \vert = u \vert H_1, H_2, \dots,H_n \vert $ where $u\in R$ is a unit.	
\end{Cor}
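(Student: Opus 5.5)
The plan is to apply Lemma~\ref{combination} twice, once in each direction. Since $\{F_1,\dots,F_n\}$ is a basis of $\hat{R}_G$, every $H_j$ lies in its span. Write $H_j=\sum_i c_{ij}F_i$ with $c_{ij}\in R$, and let $C=(c_{ij})$. Arranging the splines as columns as in the definition of $\vert A\vert$, this reads $[H_1,\dots,H_n]=[F_1,\dots,F_n]\,C$. Hence
\[
\vert H_1,\dots,H_n\vert=\vert F_1,\dots,F_n\vert\cdot\det C .
\]
The bottom-to-top ordering of rows is the same for both matrices, so it does not affect this factorization. This is exactly the content of Lemma~\ref{combination}, which I will invoke directly: $\vert F_1,\dots,F_n\vert$ divides $\vert H_1,\dots,H_n\vert$.

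Next I swap the roles of the two bases. Since $\{H_1,\dots,H_n\}$ is also a basis, each $F_i=\sum_j d_{ji}H_j$ for some matrix $D$ over $R$. Then $\vert F_1,\dots,F_n\vert=\vert H_1,\dots,H_n\vert\det D$, so $\vert H_1,\dots,H_n\vert$ divides $\vert F_1,\dots,F_n\vert$.

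It remains to conclude that the two determinants are associates, and this needs some care. If $\vert F_1,\dots,F_n\vert\neq 0$, then substituting gives $\vert F\vert=\vert F\vert\det C\det D$. Since $R$ is a GCD domain, hence an integral domain, we may cancel to get $\det C\det D=1$. So $u=\det C^{-1}=\det D$ is a unit with $\vert F\vert=u\vert H\vert$.

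The degenerate case $\vert F_1,\dots,F_n\vert=0$ is the only real obstacle. Mutual divisibility gives $\vert H_1,\dots,H_n\vert=0$ as well, so the equality holds with $u=1$. Alternatively, one can avoid splitting cases: composing the two changes of basis gives $F=F\,CD$, and the uniqueness of coordinates with respect to a basis forces $CD=I$. This already yields $\det C\det D=1$ without any cancellation, and it works uniformly whether or not the determinant vanishes. I will present this version, since it relies only on linear independence of the basis and uses Lemma~\ref{combination} merely to package the determinant factorization.
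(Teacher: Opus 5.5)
Your proof is correct and follows the route the paper intends: the corollary is stated without proof as an immediate consequence of Lemma~\ref{combination} applied in both directions, giving mutual divisibility and hence associate determinants. Your $CD=I$ argument only makes that last step explicit, and it also covers the zero-determinant case without needing cancellation in an integral domain.
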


Now we prove one of the main theorems over a GCD $R$ as follows.

\begin{Th}\label{GCD basis}
	Let  \( (G,\beta) \) be an edge-labeled graph and \(\{F_1, F_2, \ldots, F_n \}\subset \hat{R}_G \).  
	If the determinant \( \lvert F_1, F_2, \ldots, F_n \rvert = u \hat{Q}_G \) for some unit \( u \in R \),  
	then the set \( \{F_1, F_2, \ldots, F_n\} \) forms an \( R \)-module basis of \( \hat{R}_G \).
\end{Th}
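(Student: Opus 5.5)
The approach is the standard determinantal one: prove linear independence and spanning separately, with spanning as the real content.

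\emph{Linear independence.} Since $R$ is a domain and $\hat{Q}_G \neq 0$, the determinant $\lvert F_1,\ldots,F_n\rvert = u\hat{Q}_G$ is nonzero. Indeed, each $\mathcal{Q}^{(i)}$ is an lcm of nonzero elements, as the $m_i$ and $r_e$ are nonzero. Any relation $\sum x_iF_i=0$ therefore gives $Ax=0$ with $\det A\neq 0$. Multiplying by the adjugate yields $(\det A)x=0$, so $x=0$.

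\emph{Spanning.} Fix $F\in\hat{R}_G$. Replacing $F_1$ by $u^{-1}F_1$ does not change the span and makes the determinant exactly $\hat{Q}_G$. Lemma~\ref{spangraph} then gives $x_i\in R$ with $\sum x_iF_i=\hat{Q}_G F$, where $x_i$ is the determinant $\lvert F_1,\ldots,F,\ldots,F_n\rvert$ with $F$ in the $i$-th column. It therefore suffices to show that $\hat{Q}_G$ divides $\lvert F_1,\ldots,F_{i-1},F,F_{i+1},\ldots,F_n\rvert$ for every $i$ and every spline $F$. Once this holds, $F=\sum (x_i/\hat{Q}_G)F_i$ with coefficients in $R$, again by cancellation in a domain.

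The key step, and the main obstacle, is this divisibility claim:
\begin{quote}
for any $n$ splines $H_1,\ldots,H_n\in\hat{R}_G$, the element $\hat{Q}_G$ divides $\lvert H_1,\ldots,H_n\rvert$.
\end{quote}
I would prove it by column/row reduction adapted to the flow-up structure. Order the rows by vertices $v_1,\ldots,v_n$. Every entry in row $v_i$ lies in $m_iR$. Using the edge conditions $f_u\equiv f_v \pmod{r_{uv}}$ along trails, one shows that, after suitable elimination, the entries of row $i$ are divisible by $\mathcal{Q}^{(i)}$. The three ingredients of $\mathcal{Q}^{(i)}$ enter as follows.
\begin{enumerate}
\item The factor $m_i$ comes from membership in $M_{v_i}$.
\item The factors $\big[\{(P_{si})\}\big]$ for $s<i$ come from having cleared the earlier coordinates: along any trail from $v_s$ back to $v_i$, the value at $v_i$ is congruent to $0$ modulo the gcd of its edge labels.
\item The factors $\big(m_j,[\{(P_{ji})\}]\big)$ for $j>i$ come from comparing with later coordinates, which are multiples of $m_j$.
\end{enumerate}

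Concretely, I would expand by a Laplace/induction argument on $i$. Among the $H_k$, use gcd combinations in the GCD domain to produce (over the fraction field, tracking denominators) a column whose entries above row $i$ vanish. Then use Lemma~\ref{lcmgcd} to combine the several congruence conditions into divisibility by the single lcm $\mathcal{Q}^{(i)}$, and multiply over $i$. The delicate part is making the elimination legitimate over a GCD domain that is not a PID, since there one cannot simply take Bezout combinations. I would first try to argue prime-by-prime in the sense of valuations via gcds. This reduces divisibility of the determinant by an lcm to divisibility by each of its terms, using Lemma~\ref{lcmgcd}(1),(2) to distribute gcds over lcms.
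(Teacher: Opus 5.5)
Your reduction is correct. Since $\lvert F_1,\ldots,F_n\rvert=u\hat{Q}_G\neq 0$, Cramer's rule shows that spanning is equivalent to $\hat{Q}_G$ dividing $\lvert F_1,\ldots,F_{i-1},F,F_{i+1},\ldots,F_n\rvert$ for every spline $F$ and every $i$.

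The paper's proof tries to bypass this, but it does not work. Its chain of equalities writes $\lvert \hat{Q}_G F_1,F_2,\ldots,F_n\rvert$ where $\lvert \hat{Q}_G F,F_2,\ldots,F_n\rvert$ belongs. It then concludes that every $x_i$ is a unit multiple of $\hat{Q}_G$, which already fails for $F=0$. Corrected, the computation only returns $x_1=u^{-1}\lvert F,F_2,\ldots,F_n\rvert$, i.e.\ the same divisibility question.

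So you have found the real crux, but you have not proved it, and that is the gap. Your items (1)--(3) do correctly give the per-coordinate fact: if $H\in\hat{R}_G$ has $h_{v_s}=0$ for all $s<i$, then every term listed in $\mathcal{Q}^{(i)}$ divides $h_{v_i}$, hence so does their lcm $\mathcal{Q}^{(i)}$. No Lemma~\ref{lcmgcd} is needed for this.

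The missing step is passing from these row-by-row bounds to divisibility of an \emph{arbitrary} determinant by the \emph{product} $\hat{Q}_G=\prod_i\mathcal{Q}^{(i)}$. Your ``multiply over $i$'' is exactly where the work lies, not the lcm. Clearing a row with column operations $b_jH_k-b_kH_j$ multiplies the determinant by powers of $b_j$, and your sketch never says how those factors are removed. The prime-by-prime fallback is also unavailable in general. A GCD domain need not be a Krull domain (for instance $V[X]$ with $V$ a non-discrete valuation ring), so divisibility cannot be tested against discrete valuations.

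What closes the gap is an induction that keeps track of gcds.

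\emph{Claim.} Let $N\subseteq R^n$ be a submodule, and suppose $d_i$ divides the $i$-th coordinate of every element of $N$ whose first $i-1$ coordinates vanish. Then $d_1\cdots d_n\mid\det A$ for every matrix $A=(H_1,\ldots,H_n)$ with columns $H_k\in N$.

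\emph{Proof.} Induct on $n$; the case $n=1$ is the hypothesis.
\begin{enumerate}
\item If the first row $(a_{11},\ldots,a_{1n})$ is zero, then $\det A=0$. Otherwise let $g$ be its gcd and write $a_{1k}=gb_k$ with $(b_1,\ldots,b_n)=1$.
\item Fix $j$ with $b_j\neq 0$. Replacing $H_k$ ($k\neq j$) by $b_jH_k-b_kH_j$ clears the first row except for $a_{1j}$ and multiplies the determinant by $b_j^{n-1}$.
\item Expanding along the first row gives $b_j^{n-1}\det A=\pm gb_jD_j$. Here $D_j$ is a determinant of $n-1$ elements of $N'=\{h' : (0,h')\in N\}$, and $N'$ satisfies the hypothesis with $d_2,\ldots,d_n$.
\item By induction $d_2\cdots d_n\mid D_j$. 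Cancelling $b_j$ gives $gd_2\cdots d_n\mid b_j^{n-2}\det A$.
\item Hence $gd_2\cdots d_n$ divides the gcd of the elements $b_j^{n-2}\det A$ over $b_j\neq 0$. That gcd equals $\det A$ times the gcd of the $b_j^{n-2}$, which is $1$, since in a GCD domain $(b_1,\ldots,b_n)=1$ implies $(b_1^k,\ldots,b_n^k)=1$.
\item Finally $d_1\mid g$, so $d_1\cdots d_n\mid\det A$.
\end{enumerate}

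Apply this to $N=\hat{R}_G$, with coordinates ordered $v_1,\ldots,v_n$ and $d_i=\mathcal{Q}^{(i)}$. It gives $\hat{Q}_G\mid\lvert H_1,\ldots,H_n\rvert$ for all splines $H_k$, and your Cramer argument then finishes the proof.
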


\begin{proof}
	Suppose that 
	\( \lvert F_1, F_2, \ldots, F_n \rvert = u \hat{Q}_G \) for some unit \( u \in R \).
	Since \(\hat{Q}_{G} \neq 0\), the splines \(F_1,F_2,\dots,F_n\) are linearly independent. 	
	Let \(F \in \hat{R}_{G}\) be an arbitrary spline. 
	We wish to show that \(F\) can be expressed as a linear combination of 
	\(\{F_1,F_2,\dots,F_n\}\).
	By Lemma~\ref{spangraph}, we know that \(\hat{Q}_{G}F\) lies in the span of \(\{F_1,F_2,\dots,F_n\}\). 
	Therefore, there exist \(x_1,x_2,\dots,x_n \in R\) such that 
	\[
	\hat{Q}_{G}F = x_1 F_1 + x_2 F_2+\cdots + x_n F_n.
	\]
To determine the coefficients $x_i$, we write
\begin{align*}
	u x_1 \hat{Q}_{G} = & x_1 \vert F_1, F_2,\dots,F_n \vert = \vert x_1F_1, F_2,\dots,F_n \vert = \vert (x_1 F_1+ x_2 F_2+ \cdots + x_n F_n), F_2,\dots, F_n \vert \\
	= & \vert \hat{Q}_{G}F_1, F_2,\dots, F_n \vert = \hat{Q}_{G} \vert F_1, F_2, \dots, F_n \vert .
\end{align*}
so that $ x_1 = c_1\vert F_1, F_2, \dots, F_n \vert $ for some unit $c_1 \in R$.
Similarly, we can obtain:  
\begin{equation*}
	x_2 = c_2 \hat{Q}_{G}, \dots, x_n= c_n \hat{Q}_{G}
\end{equation*}
for some units $c_2,\dots, c_n \in R$. Substituting back, we find:
\begin{align*}
	\hat{Q}_{G}F= & x_1 F_1 + x_2 F_2 + \cdots + x_n F_n  \\
	= &(c_1 \hat{Q}_{G})F_1 + (c_2 \hat{Q}_{G}) F_2 + \cdots + (c_n \hat{Q}_{G})F_n  = \hat{Q}_{G} ( c_1 F_1 +  c_2 F_2 + \cdots +c_n F_n).
\end{align*}
	Hence, $F$ lies in the span of  $\{F_1,F_2, \dots, F_n\}$, and so $\{F_1,F_2, \dots, F_n\}$ forms a basis.

\end{proof}

If $R$ is a PID, we proved in \cite{DA} that there exists a flow-up basis ${\{F_1,\ldots ,F_n\}}$ for an extending generalized spline module $\hat{R}_G$ such that	\( \lvert F_1, F_2, \ldots, F_n \rvert = u \hat{Q}_G \) for some unit $u$ and its rank equals the number of vertices $n$.  
When $R$ is a GCD domain which is not a PID, there exists a free spline module that does not have a flow-up basis. The following example illustrates this.

\begin{figure}[h]
	\begin{center}
		\begin{tikzpicture}
			\node[main node] (1) {$m_3$};
			\node[main node] (2) [right = 2cm of 1]  {$m_2$};
			\node[main node] (3) [above = 2cm of 1]  {$m_1$};
			\node[main node] (4) [left = 2cm of 1]  {$m_4$};
			
			\path[draw,thick]
			(1) edge node [above]{$ r_2 $} (2)
			(1) edge node [right]{$ r_1 $} (3)
			(1) edge node [above]{$  r_3 $} (4);
			
		\end{tikzpicture}
	\end{center}
	
	\caption{Tree graph $T_4$} \label{counterex}
	
\end{figure}
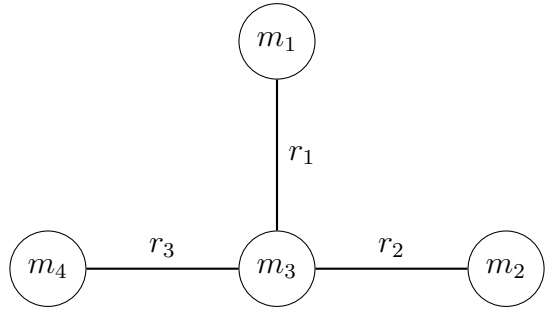

\begin{Ex}
	We consider the edge-labeled tree graph $(T_4, \beta)$ with $\mathbb{Z}[x,y]$-modules 
	$M_{v_i} = m_i \mathbb{Z}[x,y]$ at each vertex $v_i$, where
	\[
	m_1 = x, \quad m_2 = y^2, \quad m_3 = x + y, \quad m_4 = xy,
	\]
	and 
	$\beta(e) = \mathbb{Z}[x,y] / r_{e} \mathbb{Z}[x,y]$ at each edge $e$ where
	\[
	r_1 = x^2 + y, \quad r_2 = x^2, \quad r_3 = y,
	\]
	as in Figure~\ref{counterex}. First of all, we want to show that the longest-path method in \cite {DA} in order to determine the smallest nonzero entry $f_{v_2}^{(2)}$ of $F^{(2)}$ does not work here when $R$ is not PID. Secondly we want to show that there is no flow-up basis even though the spline module is free.		
		
Now if we apply the longest-path method, we obtain  \( f_{v_2}^{(2)} = y^2 \).  By using spline conditions, it can easily be proved that
 there is no $f_{v_3}^{(2)}$ satisfying
	\[
	f_{v_3}^{(2)} - y^2 \in x^2 \mathbb{Z}[x,y], \qquad 
	f_{v_3}^{(2)} \in (x+y)\mathbb{Z}[x,y], \qquad 
	f_{v_3}^{(2)} \in (x^2+y)\mathbb{Z}[x,y]
	\]
	since $(x+y)\mathbb{Z}[x,y] \cap (x^2+y)\mathbb{Z}[x,y] = (x+y)(x^2+y)\mathbb{Z}[x,y]$.
Hence there is no flow-up class $F^{(2)}$ with \( f_{v_2}^{(2)} = y^2 \).
There exists a set of flow-up classes which is not a basis for \(\hat{R}_{G}\):
\[
A =
\left\{
\begin{pmatrix}
	0\\ 0\\ 0\\ x^{3}+xy
\end{pmatrix},
\begin{pmatrix}
	0\\ 0\\ x^{2}y^{2}\\ 0
\end{pmatrix},
\begin{pmatrix}
	0\\ (x+y)(x^{2}+y)x^{2}y\\ 0\\ 0
\end{pmatrix},
\begin{pmatrix}
	xy\\ 0\\ 0\\ 0
\end{pmatrix}
\right\}.
\]
  Indeed, the spline 
\(F = (x^{2}y^{2}+x^{2}y,\; -y^{3},\; x^{2}y-y^{3},\; 0)\)
cannot be written as a \(\mathbb{Z}[x,y]\)-linear combination of the elements of \(A\).  
Comparing the second entries in
\(F=a_{1}A_{1}+\cdots+a_{4}A_{4}\) it gives  
\(a_{2}x^{2}y^{2}=-y^{3}\), hence \(a_{2}=-y/x^2\), which is not in \(\mathbb{Z}[x,y]\).  
Thus \(F\notin \mathrm{span}_{\mathbb{Z}[x,y]}(A)\).
In fact, there is no flow-up basis at all. 
To see this, we assume that
\[
B=\{F^{(1)}, F^{(2)}, F^{(3)}, F^{(4)}\}
\]
is a flow-up basis and derive a contradiction.
Let
\[
F_1 = (x^{3}+xy,\, 0,\, 0,\, 0)
\qquad\text{and}\qquad
F_2 = (x^{2}y^{2}+x^{2}y,\; -y^{3},\; x^{2}y-y^{3},\; 0)
\]
be splines.  
Since \(B\) is a basis, both \(F_1\) and \(F_2\) must be expressible as 
\(\mathbb{Z}[x,y]\)-linear combinations of the elements of \(B\).  
Comparing the first entries in these expansions imposes conditions on the 
coefficients which cannot be simultaneously satisfied.  
Indeed, the first entries give
\[
x^{3}+xy = b_1\,f^{(1)}_{v_1}
\qquad\text{and}\qquad
x^{2}y^{2}+x^{2}y = b_2\,f^{(1)}_{v_1}
\]
for some \(b_1,b_2 \in \mathbb{Z}[x,y]\).
Thus \(f^{(1)}_{v_1}\) must divide both polynomials.
However, in \(\mathbb{Z}[x,y]\), the polynomials  
\(x^{3}+xy\) and \(x^{2}y^{2}+x^{2}y\) are not associates, and therefore they 
cannot have a common divisor other than a unit.  
Since \(f^{(1)}_{v_1}\) is a non-unit by construction, this is impossible.  
Thus, no such coefficients \(b_1,b_2\) exist. Therefore this is a contradiction by assuming the existence of flow-up basis.

	Finally, by using the matrix $M_G$ defined in \cite{DA} and computing in \texttt{Macaulay2}, 
	we obtain a module basis
	\[
	B =
	\left\{
	\begin{pmatrix}
		0 \\[2pt] 0 \\[2pt] 0 \\[2pt] x^3 + xy
	\end{pmatrix},
	\begin{pmatrix}
		0 \\[2pt] -xy^2 - y^3 \\[2pt] -xy^2 - y^3 \\[2pt] x^2y^2 - xy^2
	\end{pmatrix},
	\begin{pmatrix}
	0\\ x^{2}y+xy^{2}\\ xy^{2}\\ x^{2}y+xy^{2}
	\end{pmatrix},
	\begin{pmatrix}
		xy \\[2pt] 0 \\[2pt] 0 \\[2pt] 0
	\end{pmatrix}
	\right\}.
	\]
	By definition of $\hat Q_{T_4}$ we obtain $\hat Q_{T_4} = x^4y^4(x+y)(x^2+y)$ which is
	 equal to $-\lvert B \rvert$.

\end{Ex}

The converse of Theorem~\ref{GCD basis} is not easy to establish.  
Thus, instead of treating the general case, we focus on a special situation in which 
the vertex labels \(m_1, \ldots, m_n\) and the edge labels \(r_1, \ldots, r_k\) 
are pairwise coprime.

\begin{Lemma}\label{divides}	Let $(G,\beta)$ be an edge-labeled  graph and $m_1,\ldots,m_n,r_1,\ldots,r_k$ pairwise coprime. Then
		\[
	\hat{Q}_G = \left(\prod_i^n m_i\right)\left(\prod_i^k r_i\right)=[m_1, \dots, m_n, r_1, \dots, r_k].
	\]
	
	\end{Lemma}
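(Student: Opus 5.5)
We need to prove: if all labels pairwise coprime, then $\hat Q_G=\prod m_i\prod r_j=[m_1,\dots,m_n,r_1,\dots,r_k]$.

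The plan is to compute each factor $\mathcal{Q}^{(i)}$ explicitly under the coprimality hypothesis, then multiply. The second equality is the standard fact that pairwise coprime elements of a GCD domain have least common multiple equal to their product. It follows by induction from $[a,b]=ab/(a,b)$ together with Lemma~\ref{lcmgcd}(1): $([a_1,\dots,a_{t}],a_{t+1})=[(a_1,a_{t+1}),\dots,(a_t,a_{t+1})]=1$, hence $[a_1,\dots,a_{t+1}]=[a_1,\dots,a_t]\,a_{t+1}$.

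First I simplify the trail gcds. For a trail $P_{ji}$ with at least two distinct edges, $(P_{ji})=1$, since its edge labels are pairwise coprime. If $P_{ji}$ is a single edge $e=v_jv_i$, then $(P_{ji})=r_e$. Hence $[\{(P_{ji})\}]$ equals $r_e$ when $v_j$ is adjacent to $v_i$ through the edge $e$ and that single-edge trail belongs to the relevant family. Otherwise it equals $1$. For $j>i$, the terms $(m_j,[\{(P_{ji})\}])$ are then $(m_j,r_e)=1$ or $(m_j,1)=1$, so they contribute nothing. The lcm therefore reduces to
\[
\mathcal{Q}^{(i)}=\bigl[m_i,\ \{r_e : e=v_sv_i,\ s<i\}\bigr]=m_i\prod_{\substack{e=v_sv_i\\ s<i}} r_e ,
\]
using coprimality again in the last step.

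Next I take the product over $i$. Every edge $e=v_sv_i$ with $s<i$ is counted exactly once, namely in $\mathcal{Q}^{(i)}$ for the larger endpoint $i$. Each $m_i$ appears once. This gives $\hat Q_G=\prod_i m_i\prod_e r_e$, and combined with the lcm fact above this is the claimed equality.

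The main obstacle is the bookkeeping in the first step. One must confirm that each edge $v_sv_i$ with $s<i$ really does contribute $r_e$ to the set $\{(P_{si})\}$, and is not lost because $\mathcal{P}^i$ is restricted to "longest" trails. If the definition is read as ranging over all trails from $v_s$ to $v_i$, the single-edge trail is present. I will adopt that reading, which agrees with the $C_3$ example, where $\mathcal{Q}^3=[m_3,r_2,r_3]$. Under that reading the lcm contains $r_e$, together with terms $(P)=1$ coming from longer trails, which are harmless. I will also check that no edge is double counted, for instance through a trail from $v_j$ with $j>i$. Such contributions are always wrapped in $(m_j,\cdot)$ and so collapse to $1$.
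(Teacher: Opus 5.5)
Your argument is correct and takes the same route as the paper: evaluate $\hat{Q}_G=\prod_i\mathcal{Q}^{(i)}$ directly from the definition using pairwise coprimality, then identify the resulting product with the lcm. The paper only asserts the outcome in one line, while you supply the bookkeeping it leaves implicit: trails with two or more edges have gcd $1$, the $(m_j,\cdot)$ terms collapse to $1$, so $\mathcal{Q}^{(i)}=m_i\prod_{e=v_sv_i,\,s<i}r_e$, and each edge is counted exactly once, at its larger endpoint. Your reading of the trail family as all trails, rather than only longest ones, matches the paper's $C_3$ example.
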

\begin{proof}
	By definition, we have
	\(\hat{Q}_G = \prod_{i=1}^n \mathcal{Q}^{(i)},\)
	where
	\[
	\mathcal{Q}^{(i)} =
	\left[
	m_i,\ 
	\left\{
	\big( m_j,\ [ \{ P_{ji} \} ] \big) \;\middle|\; j > i
	\right\},
	\ \left\{
	[ \{ P_{si} \} ] \;\middle|\; s < i
	\right\}
	\right].
	\]
	Since all \( m_i \) and \( r_j \) are pairwise relatively prime, it follows that
	\[
	\hat{Q}_G = \left( \prod_{i=1}^{n} m_i \right)
	\left( \prod_{i=1}^{k} r_i \right)
	\] which is $[m_1, \dots, m_n, r_1, \dots, r_k]$.
\end{proof}

\begin{Th}\label{arbitrary-divides}
	Let $(G,\beta)$ be an edge-labeled  graph and $m_1,\ldots,m_n,r_1,\ldots,r_k$ pairwise coprime
	Then $\hat{Q}_G$ divides the determinant of the elements $F_1,\ldots ,F_n$ of $\hat{R}_G$. 
\end{Th}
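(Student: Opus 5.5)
By Lemma~\ref{divides}, when all labels are pairwise coprime we have $\hat Q_G=\prod_i m_i\prod_e r_e$. It therefore suffices to show two things: each $m_i$ divides $\lvert F_1,\ldots,F_n\rvert$, and each edge label $r_e$ divides it. Since the labels are pairwise coprime in a GCD domain, their product equals their least common multiple. Hence divisibility by each label separately gives divisibility by $\hat Q_G$: if pairwise coprime $a,b$ both divide $d$, then $ab=[a,b]$ divides $d$, and we induct on the number of labels.

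\emph{Vertex labels.} Every entry $f_{ji}$ in the row of $v_i$ lies in $M_{v_i}=m_iR$, so the whole row corresponding to $v_i$ is divisible by $m_i$. Factoring $m_i$ out of each row (by multilinearity of the determinant in rows) shows that $\prod_i m_i$ divides the determinant. Equivalently, $\lvert F_1,\ldots,F_n\rvert=\left(\prod_i m_i\right)D$, where $D$ is the determinant of the matrix with entries $f_{ji}/m_i$.

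\emph{Edge labels.} Fix an edge $e=v_av_b$ with label $r_e$. The spline condition reads $f_{ja}-f_{jb}\in r_eR$ for every $j$. Replacing the row of $v_a$ by (row $v_a$) $-$ (row $v_b$) does not change the determinant, and afterwards that row is divisible by $r_e$. So $r_e$ divides $\lvert F_1,\ldots,F_n\rvert$ for each edge separately.

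\emph{Combining.} Each $m_i$ and each $r_e$ individually divides the determinant. Since they are pairwise coprime elements of a GCD domain, their product divides it. This uses Lemma~\ref{lcmgcd}, or simply the fact that $(a,b)=1$ implies $[a,b]=ab$, together with induction. By Lemma~\ref{divides} this product is $\hat Q_G$, which completes the argument.

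The only delicate point is this combining step. Doing all the row operations on a single matrix would mix rows: once a vertex row has been used in a subtraction for one edge, its divisibility by $m_i$ is lost. Handling each label separately and then invoking coprimality avoids this interference, so I expect no real obstacle beyond checking that, in a GCD domain, pairwise coprime divisors of an element have a product that also divides it. This holds because $a\mid d$ and $b\mid d$ with $(a,b)=1$ give $ab\mid d$: write $d=ad'$; then $b\mid ad'$ with $(a,b)=1$ forces $b\mid d'$, a standard GCD-domain fact.
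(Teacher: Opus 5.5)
Your proposal is correct and follows essentially the same route as the paper. Each $m_i$ divides the determinant because the row of $v_i$ lies in $m_iR$, each $r_e$ divides it after subtracting the row of one endpoint of $e$ from the other, and Lemma~\ref{divides} identifies $\hat Q_G$ with $[m_1,\ldots,m_n,r_1,\ldots,r_k]$, which these separate divisibilities force to divide the determinant; your label-by-label handling of the row operations and your GCD-domain justification of the combining step just make precise what the paper passes over quickly.
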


\begin{proof}
	Let $F_i = (f_{i1}, f_{i2}, \dots, f_{in}) \in \hat{R}_G$ be splines, and we consider the determinant of the following matrix M
	\[
	\vert M \vert=
	\begin{vmatrix}
		f_{1n} & f_{2n} & \cdots & f_{nn} \\
		\vdots & \vdots & \ddots & \vdots \\
		f_{12} & f_{22} & \cdots & f_{n2} \\
		f_{11} & f_{21} & \cdots & f_{n1}
	\end{vmatrix}
	=
	\begin{vmatrix}
		f_{1n} & f_{2n} & \cdots & f_{nn} \\
		\vdots & \vdots & \ddots & \vdots \\
		f_{12} & f_{22} & \cdots & f_{n2} \\
		f_{11} - f_{12} & f_{21} - f_{22} & \cdots & f_{n1} - f_{n2}
	\end{vmatrix}.
	\]
	By definition, each component $f_{ij} \in m_j R$ for all $i,j = 1,2,\dots,n$. It follows that $m_j$ divides $\vert M \vert$. 
	Without loss of generality, let $r_1$ be the edge label between $ v_1$ and $v_2$. 
	The spline condition implies that $f_{11} - f_{12} \in r_1 R$ 
	so that $r_1 \mid \vert M\vert$. A similar argument shows that for any $\ell$, $r_\ell \mid \vert M\vert$. 
	Hence, by Lemma ~\ref{divides}, we have
	\[
	\hat{Q}_G = [m_1, \dots, m_n, r_1, \dots, r_k] \mid \vert M\vert.
	\]
\end{proof}

\begin{Th}
	Let \( (G,\beta) \) be an edge-labeled graph with vertex labels  
	\( m_1, m_2, \ldots, m_n \)  
	and edge labels  
	\( r_1, r_2, \ldots, r_k \) pairwise relatively prime.  
	If  the module \( \hat{R}_G \) is free with basis  
	\( \{ F_1, F_2, \dots, F_n \} \),  
	then 
	\[
	\lvert F_1, F_2, \ldots, F_n \rvert  = u \, \hat{Q}_G
	\]
	for some unit \( u \in R \).
\end{Th}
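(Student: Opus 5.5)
Under the pairwise coprime hypothesis, Theorem~\ref{arbitrary-divides} already gives $\hat{Q}_G \mid \lvert F_1,\ldots,F_n\rvert$ for the given basis. So it suffices to prove the reverse divisibility $\lvert F_1,\ldots,F_n\rvert \mid \hat{Q}_G$; in a domain, mutual divisibility of nonzero elements makes them associates.

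For the reverse divisibility, I will exhibit explicit splines $H_1,\ldots,H_n\in\hat{R}_G$ with $\lvert H_1,\ldots,H_n\rvert = u'\hat{Q}_G$ for a unit $u'$. Since $\{F_1,\ldots,F_n\}$ is a basis, each $H_i$ is an $R$-linear combination of the $F_j$, and Lemma~\ref{combination} then gives $\lvert F_1,\ldots,F_n\rvert \mid \lvert H_1,\ldots,H_n\rvert = u'\hat{Q}_G$. This also shows $\lvert F_1,\ldots,F_n\rvert\neq 0$, because $\hat{Q}_G\neq 0$.

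The natural candidates are built from Lemma~\ref{divides}, which gives $\hat{Q}_G = \prod_i m_i \prod_e r_e$. Order the vertices $v_1,\ldots,v_n$ and choose a spanning forest of $G$. For $H_1$ I take the constant spline $H_1=(M,M,\ldots,M)$ with $M=m_1\cdots m_n$. It satisfies every edge condition trivially, and each entry lies in the required $m_iR$.

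For $i\ge 2$ I want $H_i$ to be supported on a suitable set of vertices, in lower-triangular fashion, with entry at $v_i$ equal to $m_i$ times the product of the edge labels $r_e$ over the edges $e$ joining $v_i$ to the support's complement, up to a carefully arranged factor. The goal is a triangular matrix whose diagonal product is $\prod m_i \prod r_e$. A cleaner choice, which I will try first, uses elimination modulo the edge labels. Subtracting rows (as in the proof of Theorem~\ref{arbitrary-divides}) reduces the determinant to a block form. The factor $\prod m_i$ comes from the vertex labels, and each $r_e$ enters exactly once through a flow-up spline supported on a connected piece, whose existence uses the Chinese remainder theorem for the coprime ideals $m_iR$ and $r_eR$.

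The hard step is constructing splines with determinant exactly an associate of $\hat{Q}_G$ over a general GCD domain. Coprimality $(a,b)=1$ in a GCD domain does not give a Bézout relation $aR+bR=R$, so the CRT-type existence of flow-up classes may fail. In that case I would instead argue by localization or by a gcd argument: for each prime-like factor, compare $\lvert F\rvert$ with the determinants of the splines $\hat{Q}_G e_j$-type combinations. Concretely, the spline whose entries all equal $\hat{Q}_G$ at a single vertex $v_j$ and are $0$ elsewhere is a spline, because $\hat{Q}_G$ is divisible by $m_j$ and by every $r_e$. Hence $\hat{Q}_G I$ lies in the span of the basis, which gives $\lvert F\rvert \mid \hat{Q}_G^{\,n}$. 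I would then try to refine this to $\lvert F\rvert \mid \hat{Q}_G$ by mixing these splines with $H_1$: the matrix with columns $H_1$ and $\hat{Q}_G e_2,\ldots,\hat{Q}_G e_n$ has determinant $M\hat{Q}_G^{\,n-1}$. I would also compute the gcd of such determinants over different choices of omitted vertex and different edge-based splines, hoping to reach $\hat{Q}_G$ using Lemma~\ref{lcmgcd}(4).
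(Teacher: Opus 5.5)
\textbf{There is a genuine gap: the reverse divisibility $\lvert F_1,\ldots,F_n\rvert \mid \hat{Q}_G$ is never established.}

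Your outer frame matches the paper's. Theorem~\ref{arbitrary-divides} gives $\hat{Q}_G \mid \lvert F_1,\ldots,F_n\rvert$, and what remains is to show $\lvert F_1,\ldots,F_n\rvert \mid \hat{Q}_G$ by testing the basis against matrices of splines via Lemma~\ref{combination}. Neither of your two routes reaches that second half.

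Your first route cannot succeed in general. It asks for an explicit family $H_1,\ldots,H_n$, built without reference to the $F_j$, with $\lvert H_1,\ldots,H_n\rvert$ an associate of $\hat{Q}_G$. By Theorem~\ref{GCD basis} such $H_i$ would form a basis, so you would be proving that $\hat{R}_G$ is always free under the coprimality hypothesis. That is false. Take $R=k[x,y,z]$, $G=C_3$, all $m_i=1$, and labels $x$ on $v_1v_2$, $y$ on $v_2v_3$, $z$ on $v_1v_3$. Then $\hat{R}_G$ is minimally generated by $(1,1,1)$, $(xz,0,0)$, $(0,xy,0)$, $(yz,yz,0)$ but has rank $3$.

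Your fallback only produces test matrices with determinants $\hat{Q}_G^{\,n}$ and $M\hat{Q}_G^{\,n-1}$. These give $\lvert F_1,\ldots,F_n\rvert \mid M\hat{Q}_G^{\,n-1}$, which in general is strictly weaker than what you need. The reason is that every one of your test matrices carries $\hat{Q}_G$ in at least $n-1$ columns. The gcd computation you leave as a hope is exactly the content of the proof.

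\textbf{The missing idea} is to put $\hat{Q}_G$ in only one column and fill the other columns with much smaller splines, each omitting a single label.
\begin{itemize}
\item Write $l_1,\ldots,l_{n+k}$ for all the $m_i$ and $r_e$. Let $\hat{l}_t$ be the product of all of them except $l_t$, and let $e_j$ be the vector with $1$ at $v_j$ and $0$ elsewhere.
\item The vector $\hat{l}_t e_j$ is a spline whenever $l_t$ is neither $m_j$ nor the label of an edge at $v_j$.
\item If $l_t=m_i$, take the columns $\hat{Q}_G e_i$ and $\hat{l}_t e_j$ for $j\neq i$.
\item If $l_t=r_e$ with $e=v_av_b$, take the columns $\hat{l}_t(e_a+e_b)$, $\hat{Q}_G e_b$, and $\hat{l}_t e_j$ for $j\neq a,b$. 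All of these are splines.
\item Each such matrix is triangular with determinant $\pm\hat{l}_t^{\,n-1}\hat{Q}_G$. Lemma~\ref{combination} then gives $\lvert F_1,\ldots,F_n\rvert \mid \hat{l}_t^{\,n-1}\hat{Q}_G$ for every $t$.
\item Pairwise coprimality and Lemma~\ref{lcmgcd}(4) give $(\hat{l}_1,\ldots,\hat{l}_{n+k})=1$. Hence $(\hat{l}_1^{\,n-1},\ldots,\hat{l}_{n+k}^{\,n-1})=1$ in a GCD domain, so $\lvert F_1,\ldots,F_n\rvert\mid\hat{Q}_G$, and you are done.
\end{itemize}
This is the paper's argument (its matrices $A^{(i)}$). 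It uses no CRT, Bézout relation or localization, which is why it works over an arbitrary GCD domain.
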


\begin{proof}
Assume that \( \{F_1, F_2, \ldots, F_n\} \) forms an \( R \)-module basis.  
Then, by Theorem~\ref{arbitrary-divides} we have
$$\lvert F_1, F_2, \ldots, F_n \rvert = u \hat{Q}_{G}$$
for some \( u \in R \).  
It therefore suffices to show that \( u \) is a unit.
We collectively denote \( m_i \) and \( r_j \) by \( l_i \) and \( l_{n+j} \), respectively.
For each \( i \), set
\[
\hat{l}_i = l_1 l_2 \cdots l_{i-1} \, l_{i+1} \cdots l_{n+k}.
\]
By Lemma~\ref{lcmgcd}, we have
\[
(\hat{l}_1, \hat{l}_2, \dots, \hat{l}_{n+k})
= \frac{l_1 l_2 \cdots l_{n+k}}{[l_1, l_2, \dots, l_{n+k}]}.
\]
Since all \( l_i \) and \( l_j \) are pairwise relatively prime, it follows that
\([l_1, l_2, \dots, l_{n+k}] = l_1 l_2 \cdots l_{n+k}.\)
Hence,
\((\hat{l}_1, \hat{l}_2, \dots, \hat{l}_{n+k}) = 1.\)
We now construct matrices
\[
A^{(i)} =
\begin{bmatrix}
	A^{(i)}_1 & A^{(i)}_2 & \cdots & A^{(i)}_n 
\end{bmatrix}.
\]
In other words,
\[
A^{(i)} =
\begin{bmatrix}
	a_{1n}^{(i)} & a_{2n}^{(i)} & \cdots & a_{nn}^{(i)} \\[4pt]
	\vdots & \vdots & & \vdots \\[4pt]
	a_{12}^{(i)} & a_{22}^{(i)} & \cdots & a_{n2}^{(i)} \\[4pt]
	a_{11}^{(i)} & a_{21}^{(i)} & \cdots & a_{n1}^{(i)}
\end{bmatrix},
\]
for \( i = 1, \dots, n+k \).  
For a fixed \( i \in \{1, \dots, n\} \), we define the entries of the columns \( A^{(i)}_j \) for \( j = 1, \dots, n \) by
\[
a_{j_1 j_2}^{(i)} =
\begin{cases}
	\hat Q_G, & \text{if } i = j_1 = j_2, \\[4pt]
	\hat{l}_i, & \text{if } i \neq j_1 = j_2, \\[4pt]
	0, & \text{if } i \neq j_1 \neq j_2.
\end{cases}
\]
It is easy to see that each column \( A^{(i)}_j \) for \( i = 1, \dots, n \) defines a spline.  

Next, we define the matrices \( A^{(i)} \) for \( i = n+1, \dots, n+k \).  
Without loss of generality, let \( r_1 \) be the edge connecting \( v_1 \) and \( v_2 \).  
Then we can write
\[
A^{(n+1)} =
\begin{bmatrix}
	0 & 0 & 0 & \cdots & \hat{l}_{n+1} \\[4pt]
	\vdots & \vdots & & \vdots \\[4pt]
	0 & 0 & \hat{l}_{n+1} & \cdots & 0 \\[4pt]
	\hat{l}_{n+1} & \hat{Q}_G & 0 & \cdots & 0 \\[4pt]
	\hat{l}_{n+1} & 0 & 0 & \cdots & 0
\end{bmatrix}.
\]
Again, each column \( A^{(n+1)}_j \) for \( j = 1, \dots, n \) is a spline, and a similar construction applies to any other edge \( r_i \).  
Thus, we can construct all matrices \( A^{(i)} \) for \( i = 1, \dots, n+k \)
whose each column \( A^{(i)}_j \) for \( i = 1, \dots, n+k \) is a spline. It follows from Lemma~\ref{combination} that it lies in the span of the \( F_i \)'s.
Because
\[
\vert F_1, F_2, \ldots, F_n \vert = u \hat{Q}_G
\quad \text{for some } u \in R,
\]
and $\vert A^{(i)} \vert$ divides $\vert F_1, F_2, \ldots, F_n \vert$ we obtain
\[
 \vert A^{(i)} \vert = t_i u \hat{Q}_G
\quad \text{for some } t_i \in R.
\]
On the other hand, by the determinant of $A^{(i)}$, we have
\[
\vert A^{(i)} \vert  = (\hat{l}_{i})^{n-1} \hat{Q}_G.
\]
Hence \( u \) divides \( \hat{l}_{i} \) for all \( i \).  
Since \( (\hat{l}_1, \hat{l}_2, \dots, \hat{l}_{n+k}) = 1 \), it follows that \( u \) is a unit.

\end{proof}

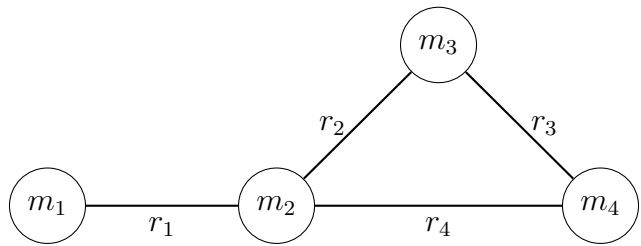
\begin{figure}[h]
	
	\begin{center}
		\begin{tikzpicture}
			\node[main node] (1) {$m_3$};
			\node[main node] (2) [below left = 2cm of 1]  {$m_2$};
			\node[main node] (3) [below right = 2cm of 1] {$m_4$};
				\node[main node] (4) [left = 2cm of 2] {$m_1$};
			
			\path[draw,thick]
			(1) edge node [left]{$r_2 $} (2)
			(2) edge node [below]{$r_4$} (3)
			(2) edge node [below]{$r_1$} (4)
			(3) edge node [right]{$r_3$} (1);

		\end{tikzpicture}
	\end{center}
	
	\caption{An edge-labeled graph $(G,\beta)$} \label{graphex}
\end{figure}

\begin{Ex}
Consider the edge-labeled graph \( (G, \beta) \) shown in Figure~\ref{graphex},  
with vertex labels \( m_i \) and edge labels \( r_j \),  
where all \( m_i \) and \( r_j \) are pairwise relatively prime.
Let \( \{ F_1, F_2, F_3, F_4 \} \) be a basis for \( \hat{R}_G \).  
By Theorem~\ref{arbitrary-divides}, we have
\(\lvert F_1, F_2, F_3, F_4 \rvert = u \hat{Q}_{G}\)
for some \( u \in R \), where
\[
\hat{Q}_{G} = \prod_{i=1}^{4} (m_i \cdot r_i).
\]
We denote the vertex and edge labels \( m_i \) and \( r_j \) by \( l_i \) and \( l_{n+j} \), respectively.  
For each \( i \), define
\[
\hat{l}_i = l_1 l_2 \cdots l_{i-1} \, l_{i+1} \cdots l_{8}.
\]
Since all \( l_i \) and \( l_j \) are pairwise relatively prime, by Lemma~\ref{lcmgcd} we have
\[
(\hat{l}_1, \hat{l}_2, \dots, \hat{l}_{8}) = 1.
\]
Now consider the following matrices:
\[
A^{(1)} =
\begin{bmatrix}
	0 & 0 & 0 & \hat{l}_{1} \\[4pt]
	0 & 0 & \hat{l}_{1} & 0 \\[4pt]
	0 & \hat{l}_{1} & 0 & 0 \\[4pt]
	\hat{Q}_G & 0 & 0 & 0
\end{bmatrix},
\quad
A^{(2)} =
\begin{bmatrix}
	0 & 0 & 0 & \hat{l}_{2} \\[4pt]
	0 & 0 & \hat{l}_{2} & 0 \\[4pt]
	0 & \hat{Q}_G & 0 & 0 \\[4pt]
	\hat{l}_{2} & 0 & 0 & 0
\end{bmatrix},
\]
\[
A^{(3)} =
\begin{bmatrix}
	0 & 0 & 0 & \hat{l}_{3} \\[4pt]
	0 & 0 & \hat{Q}_G& 0 \\[4pt]
	0 & \hat{l}_3  & 0 & 0 \\[4pt]
	\hat{l}_{3} & 0 & 0 & 0
\end{bmatrix},
\quad 
A^{(4)} =
\begin{bmatrix}
	0 & 0 & 0 & \hat{Q}_G \\[4pt]
	0 & 0 & \hat{l}_{4} & 0 \\[4pt]
	0 & \hat{l}_{4} & 0 & 0 \\[4pt]
	\hat{l}_{4} & 0 & 0 & 0
\end{bmatrix},
\]
\[A^{(5)} =
\begin{bmatrix}
	0 & 0 & 0 & \hat{l}_{5} \\[4pt]
	0 & 0 & \hat{l}_{5} & 0 \\[4pt]
	\hat{l}_{5} & \hat{Q}_G & 0 & 0 \\[4pt]
	\hat{l}_{5} & 0 & 0 & 0
\end{bmatrix},
\quad
A^{(6)} =
\begin{bmatrix}
	0 & 0 & 0 & \hat{l}_{6} \\[4pt]
	\hat{l}_{6} & 0 &0 & 0 \\[4pt]
	\hat{l}_{6} & \hat{Q}_G  & 0 & 0 \\[4pt]
	0 & 0 & \hat{l}_6 & 0
\end{bmatrix},\]
\[A^{(7)} =
\begin{bmatrix}
	\hat{l}_{7} & 0 & 0 & 0 \\[4pt]
	\hat{l}_{7} & 0 & 0 & \hat{Q}_G \\[4pt]
	0 & 0 & \hat{l}_{7}& 0 \\[4pt]
	0 & \hat{l}_{7} & 0 & 0
\end{bmatrix},
\quad
A^{(8)} =
\begin{bmatrix}
		\hat{l}_{8} & 0 & 0 &\hat{Q}_G \\[4pt]
	0 & 0 & \hat{l}_8& 0 \\[4pt]
		\hat{l}_{8} & 0  & 0 & 0 \\[4pt]
 0& \hat{l}_8 & 0 & 0
\end{bmatrix}.\]
Each column of $A^{(i)}$ is a spline for \( \hat{R}_G \). Thus, by Lemma~\ref{combination},
\( \vert F_1, F_2, F_3,F_4 \vert  = u \hat{Q}_G\) divides 
\(\vert A^{(i)} \vert  = (\hat{l}_{i})^{3} \hat{Q}_G.\)
Hence \( u \) divides \( \hat{l}_{i} \) for all \( i \).  
Since \( (\hat{l}_1, \hat{l}_2, \dots, \hat{l}_{8}) = 1 \), it follows that \( u \) is a unit.

\end{Ex}

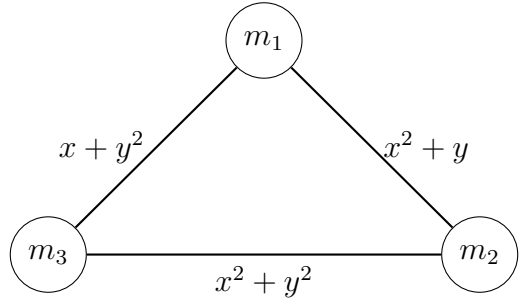
\begin{figure}[h]
	
	\begin{center}
		\begin{tikzpicture}
			\node[main node] (1) {$m_1$};
			\node[main node] (2) [below left = 3cm of 1]  {$m_3$};
			\node[main node] (3) [below right = 3cm of 1] {$m_2$};
			
			\path[draw,thick]
			(1) edge node [left]{$x+y^2 $} (2)
			(2) edge node [below]{$x^2+y^2$} (3)
			(3) edge node [right]{$x^2+y$} (1);

		\end{tikzpicture}
	\end{center}
	
	\caption{An edge-labeled cycle graph $(C_3,\beta)$} \label{cycleex} 
\end{figure}

\begin{Ex}
	Consider the edge-labeled cycle graph $(C_3, \beta)$ with $R$-modules 
	$M_{v_i} = m_i \mathbb{Q}[x,y]$ at each vertex $v_i$, where
	\[
	m_1 = x, \quad m_2 = y, \quad m_3 = x + y
	\]
	as in Figure \ref{cycleex}.	By employing the matrix $M_G$ introduced in \cite{DA} and performing the necessary computations in \texttt{Macaulay2}, 
	we obtain the following module basis:
	\[
	B =
	\left\{
	\begin{pmatrix}
		x^2y+xy^2 \\[2pt]
		x^2y+xy^2 \\[2pt]
		x^2y+xy^2
	\end{pmatrix},
	\begin{pmatrix}
		x^3+x^2+xy^3-2xy^2+xy+y^4-y^3 \\[2pt]
		x^3y-x^2y^2+2xy^3-3xy^2+xy-y^3-y^2 \\[2pt]
		x^3+x^2y+2xy^3-4xy^2+2xy
	\end{pmatrix},
	\begin{pmatrix}
		x^4-x^3+3xy^2-y^4+2y^3\\[2pt]
		-x^2y+4xy^2+y^3 \\[2pt]
		x^4-2x^3-x^2y^2+4xy^2-2xy 	
	\end{pmatrix}
	\right\}.
	\]
	It follows that
	\[
\lvert B \rvert = 2xy(x+y)(x+y^{2})(x^{2}+y)(x^{2}+y^{2}),
	\]
	which coincides with $2	\hat Q_{C_3}$.
	
\end{Ex}

\section{Determinantal Criteria Over PID}

If \(R\) is a PID, we proved in \cite{DA} that flow-up bases always exist and the rank
equals the number of vertices \(n\).  
In contrast, when \(R\) is not a PID, flow-up bases may not need to exist even though when the
spline module is free.  

In this section, we focus on the PID case and describe the precise relationship
between generalized splines and extending generalized splines.  
We show that, when \(R\) is a PID, the existence of a minimal flow-up basis yields
a determinantal criterion that holds for arbitrary graphs.  

\begin{Def}
	Let $F^{(i)}=(0,\dots,0,f_{v_i}^{(i)}, \dots, f_{v_n}^{(i)}) \in \hat{\mathcal{F}_i}$   be a flow-up class with $f_{v_i}^{(i)} \neq 0$ for $i=1,2,\ldots,n$ and $f_{v_s}^{(i)} =0$ for all $s <i$. We define the leading term of $F^{(i)}, \mathrm{LT}(F^{(i)})=f_{v_i}^{(i)}$, the first nonzero entry of $F^{(i)}$. 
	The set of all leading terms of splines in $\hat{\mathcal{F}_i}$ together with a trivial spline, $<\mathrm{LT}(\hat{\mathcal{F}_i})>,$ forms an ideal of $R$.
\end{Def}

\begin{Def}
	A flow-up class $F^{(i)} $ is called a minimal element of $ \hat{\mathcal{F}_i}$ if $\mathrm{LT}(F^{(i)})$ is a generator of the ideal  $\mathrm{LT}(\hat{\mathcal{F}_i})$.
\end{Def}

\begin{Def}
	\emph{A minimum generating set} for a $\mathbb{Z}$-module $\hat{R}_G$ is a spanning set of splines with the smallest possible number of elements. The size of a minimum generating set is called  \emph{rank} and is denoted by $\rk \hat{R}_G$.
\end{Def}

\begin{Th}[The Chinese Remainder Theorem] \label{CRT}
	Let $R$ be a PID and $x, a_1,\ldots, a_n, b_1, \ldots, b_n \in R$. Then the system
	\begin{align*}
		x & \equiv a_1 \mod b_1 \\
		x & \equiv a_2 \mod b_2 \\
		& \quad \quad \vdots\\
		x & \equiv a_n \mod b_n
	\end{align*}
	has a solution if and only if $a_i \equiv a_j \mod (b_i, b_j )$ for all $i, j \in \{1,\dots,n\}$ with $i \neq j$.
	The solution is unique modulo $[b_1,\dots,b_n]$.

\end{Th}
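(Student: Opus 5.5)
We prove the Chinese Remainder Theorem over a PID $R$ by reducing the general $n$-congruence system to the two-congruence case and then inducting on $n$, using that in a PID every ideal $(b_i)+(b_j)$ equals $((b_i,b_j))$ and $(b_i)\cap(b_j)=([b_i,b_j])$.

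\emph{Necessity.} Suppose $x$ solves the system. For $i\neq j$ we have $a_i-a_j=(x-a_j)-(x-a_i)\in (b_i)+(b_j)=((b_i,b_j))$, so $a_i\equiv a_j \bmod (b_i,b_j)$.

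\emph{Uniqueness.} If $x,x'$ are two solutions, then $x-x'\in\bigcap_i (b_i)=([b_1,\dots,b_n])$. Conversely, any $x'\equiv x \bmod [b_1,\dots,b_n]$ is again a solution, since each $b_i$ divides $[b_1,\dots,b_n]$.

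\emph{Two congruences.} Write $d=(b_1,b_2)$. By B\'ezout in the PID there are $s,t$ with $sb_1+tb_2=d$. If $d\mid a_2-a_1$, write $a_2-a_1=cd$ and set
\[
x=a_1+csb_1 = a_2-ctb_2 .
\]
Then $x\equiv a_1\bmod b_1$ and $x\equiv a_2 \bmod b_2$.

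\emph{Induction step.} Assume the result holds for $n-1$ congruences, and let the pairwise conditions hold for $n$ of them. By the inductive hypothesis there is $y$ solving the first $n-1$ congruences, and the set of such solutions is exactly $y+(L)$ with $L=[b_1,\dots,b_{n-1}]$. So the full system is equivalent to the pair
\[
x\equiv y \bmod L,\qquad x\equiv a_n \bmod b_n .
\]
By the two-congruence case it suffices to show $y\equiv a_n \bmod (L,b_n)$.

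This compatibility check is the main point. Apply Lemma~\ref{lcmgcd}(1):
\[
(L,b_n)=[(b_1,b_n),\dots,(b_{n-1},b_n)].
\]
For each $i<n$ we have $y-a_n=(y-a_i)+(a_i-a_n)$. Here $b_i\mid y-a_i$, so $(b_i,b_n)\mid y-a_i$, and $(b_i,b_n)\mid a_i-a_n$ by hypothesis. Hence every $(b_i,b_n)$ divides $y-a_n$, so their least common multiple $(L,b_n)$ divides $y-a_n$. This is exactly where the pairwise hypothesis is used. The two-congruence case now yields a solution $x$, completing the induction.
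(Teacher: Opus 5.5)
Your proof is correct. The paper gives no proof of this statement to compare against: it quotes the Chinese Remainder Theorem as a classical fact and uses it only in the PID setting of Section~4, to bound leading terms of flow-up classes. Your argument is the standard one and fills in what the paper leaves out:
\begin{itemize}
\item necessity and uniqueness come from $(b_i)+(b_j)\subseteq((b_i,b_j))$ and $\bigcap_i(b_i)=([b_1,\dots,b_n])$;
\item the two-congruence case comes from B\'ezout;
\item the induction gets its compatibility check from Lemma~\ref{lcmgcd}(1).
\end{itemize}

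Two small points are worth tightening. First, the direction of Lemma~\ref{lcmgcd}(1) you actually need is the nontrivial one, $(L,b_n)\mid[(b_1,b_n),\dots,(b_{n-1},b_n)]$; the reverse divisibility is obvious. The paper omits that lemma's proof, so you could add that in a PID it is just $\min(\max_i e_i,f)=\max_i\min(e_i,f)$ on prime exponents. Second, the lemma is stated for nonzero elements, while the theorem allows some $b_i=0$. This costs one line: if $b_i=0$ then $x=a_i$ is forced, and the hypotheses $a_i\equiv a_j \bmod b_j$ are exactly what is required. Alternatively, check that the identity survives with the conventions $(a,0)=a$ and $[a,0]=0$.

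Compared with, say, reducing to prime-power moduli, your approach never uses unique factorization. It works verbatim over any B\'ezout domain, and it shows that B\'ezout is needed only for existence. Necessity and uniqueness hold in every GCD domain, but existence genuinely fails there. In $\mathbb{Q}[x,y]$ the system $z\equiv 0 \bmod x$, $z\equiv 1 \bmod y$ is compatible since $\gcd(x,y)=1$. Yet it has no solution, because $x\mathbb{Q}[x,y]+y\mathbb{Q}[x,y]$ is a proper ideal. So the statement cannot be carried over to the GCD setting of Section~3.
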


\begin{Th}
	Let  $(G,\beta)$ be an edge-labeled graph  and  $F^{(i)} = (0,\dots,0,f_{v_i}^{(i)},\dots,f_{v_n}^{(i)})$  a flow-up class where $f_{v_i}^{(i)} \neq 0$ for $i>1$ and $f_{v_s}^{(i)} = 0$ for all $s<i$.  
	Consider a trail $P_{ji}$ from $v_j$ to $v_i$. Let $(P_{ji})$  be the greatest common divisor of the edge labels of $P_{ji}$ and $\{(P_{ji})\}$ denote the set of greatest common divisors of the edge labels of all trails $P_{ji}$.  
	Then $f_{v_i}^{(i)}$ is divisible by
	\[
	\left[ m_i,\ \left\{ \left( m_j,\ \big[ \{ (P_{ji}) \} \big] \right) \;\vert\; j > i \right\},\  \{\big[ \{ (P_{si}) \} \big]  \;\mid\;  s<i \}  \right].
	\]
	
\end{Th}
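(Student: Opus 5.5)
We have to show that $f_{v_i}^{(i)}$ is divisible by every entry of the least common multiple. Since $R$ is a PID (hence a GCD domain), an element divisible by each listed term is divisible by their lcm. So it suffices to check three kinds of divisibility separately: by $m_i$, by each $\bigl(m_j, [\{(P_{ji})\}]\bigr)$ with $j>i$, and by each $[\{(P_{si})\}]$ with $s<i$.

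The first is immediate, because $f_{v_i}^{(i)} \in M_{v_i} = m_i R$.

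For $s<i$ we use the flow-up condition $f_{v_s}^{(i)}=0$. Fix a trail $P_{si} = (v_s\, e\, \cdots\, e'\, v_i)$. Along each edge $uw$ of the trail the spline condition gives $f_u^{(i)} - f_w^{(i)} \in r_{uw}R$. Hence $f_u^{(i)} \equiv f_w^{(i)} \pmod{(P_{si})}$, since the gcd of the trail's edge labels divides every $r_{uw}$ on it. Telescoping along the trail gives $f_{v_i}^{(i)} \equiv f_{v_s}^{(i)} = 0 \pmod{(P_{si})}$. Thus $(P_{si}) \mid f_{v_i}^{(i)}$ for every trail from $v_s$ to $v_i$, and therefore $[\{(P_{si})\}] \mid f_{v_i}^{(i)}$. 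Only the trail structure (consecutive edges) is used here, not edge-distinctness.

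For $j>i$ the same telescoping along any trail $P_{ji}$ gives $f_{v_i}^{(i)} \equiv f_{v_j}^{(i)} \pmod{(P_{ji})}$. Taking all trails together gives $f_{v_i}^{(i)} \equiv f_{v_j}^{(i)} \pmod{[\{(P_{ji})\}]}$. Set $d=\bigl(m_j,[\{(P_{ji})\}]\bigr)$. Then $d \mid m_j \mid f_{v_j}^{(i)}$ because $f_{v_j}^{(i)}\in m_jR$, and $d \mid [\{(P_{ji})\}] \mid f_{v_i}^{(i)} - f_{v_j}^{(i)}$. Combining these, $d \mid f_{v_i}^{(i)}$.

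Finally we combine the three kinds of divisibility. In a PID, if $a_1,\dots,a_t$ all divide $f$, then $[a_1,\dots,a_t] \mid f$; this is the CRT-flavoured fact from Theorem~\ref{CRT}, or just unique factorization. This yields the claim.

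None of the steps is a serious obstacle. The only point needing care is the passage from edge congruences to the gcd of a trail's labels. The key observation is that $(P_{ji})$ divides each individual edge label, so a congruence modulo $r_e$ implies a congruence modulo $(P_{ji})$ at every step. Note also that the hypothesis $i>1$ is only needed for the set $\{s<i\}$ to be nonempty; the argument itself is uniform in $i$.
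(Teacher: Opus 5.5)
Your proof is correct and follows the same route as the paper. Both arguments derive the congruences $f_{v_i}^{(i)} \equiv f_{v_j}^{(i)} \pmod{[\{(P_{ji})\}]}$ and $f_{v_i}^{(i)} \equiv 0 \pmod{[\{(P_{si})\}]}$ from the spline conditions along trails, add $m_j \mid f_{v_j}^{(i)}$ and $m_i \mid f_{v_i}^{(i)}$, and combine everything into the lcm. Where the paper cites the Chinese Remainder Theorem, you directly prove the necessity direction it actually uses (that $(m_j,[\{(P_{ji})\}])$ divides $f_{v_i}^{(i)}$) and spell out the telescoping the paper leaves implicit; this is cleaner and works over any GCD domain.
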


\begin{proof}
	Assume that $f_{v_s}^{(i)} = 0$ for all $s<i$ and $f_{v_i}^{(i)}\ne 0$.  
	For each an index $j >i$ and $s<i$ by using spline conditions
	we can obtain the following system of congruences:
	\[
	\begin{aligned}
		f_{v_i}^{(i)} &\equiv f_{v_j}^{(i)} 
		&&\mod\big[\{(P_{ji}) \}\big] \\[6pt]
		f_{v_i}^{(i)} &\equiv 0 
		&&\mod\big[\{(P_{si} )\}\big] \\[6pt]
		f_{v_j}^{(i)} &\equiv 0 
		&&\mod m_j \\[6pt]
		f_{v_i}^{(i)} &\equiv 0 
		&&\mod m_i.
	\end{aligned}
	\]
	By Theorem \ref{CRT} (CRT),  the system has a solution for $f_{v_{i}}^{(i)}$  if and only if \[f_{v_{i}}^{(i)} \equiv 0 \mod \left[ m_i,\ \left\{ \left( m_j,\ \big[ \{ (P_{ji}) \} \big] \right) \;\mid\; j >i \right\},\  \big[ \{ (P_{si}) \}_{s<i} \big] \right].\]  
	This means that  $f_{v_i}^{(i)}$ is divisible by
	\[
	\left[ m_i,\ \left\{ \left( m_j,\ \big[ \{ (P_{ji}) \} \big] \right) \;\mid\; j >i \right\},\  \{ (P_{si}) \}_{s<i}  \right].
	\]
\end{proof}

\begin{Cor}\label{det}
Let $(G,\beta)$ be an edge-labeled graph. 
For each $i=1,\dots,n$, there exists a flow-up class
\(F^{(i)} = (0,\dots,0,\,f^{(i)}_{v_i},\,\dots, f^{(i)}_{v_n})\)
where
\[
f^{(i)}_{v_i}
=
\left[
\,m_i,\ 
\big\{ (m_j,\,[\{(P_{ji})\}]) \;\vert\; j>i \big\},\
\big\{ [\{(P_{si})\}] \;\vert\; s<i \big\}
\right].
\]
These flow-up classes form a basis satisfying
\[
\bigl|\, F^{(1)},\, F^{(2)},\,\dots,\,F^{(n)} \,\bigr| 
= \pm \,\widehat{Q}_{G}.
\]

\end{Cor}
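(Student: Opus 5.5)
We work over a PID $R$ and follow the order of the statement: first produce the flow-up classes with the prescribed leading terms, then show they form a basis, and finally compute the determinant. Throughout we write $\mathcal{Q}^{(i)}$ for the element displayed in the statement.

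\emph{Step 1: existence.} For each $i$ we construct $F^{(i)}$ by prescribing $f^{(i)}_{v_s}=0$ for $s<i$ and $f^{(i)}_{v_i}=\mathcal{Q}^{(i)}$. We then fill in the entries $f^{(i)}_{v_j}$ for $j>i$ one vertex at a time, in increasing order of $j$. At vertex $v_j$ the value must satisfy $f^{(i)}_{v_j}\equiv 0 \bmod m_j$, together with $f^{(i)}_{v_j}\equiv f^{(i)}_{v_t}\bmod r_e$ for every already-labelled neighbour $v_t$ joined to $v_j$ by an edge $e$.

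By Theorem~\ref{CRT} this system is solvable if and only if the data are pairwise compatible modulo the gcds of the moduli. These compatibility conditions reduce, by induction along trails, to the divisibility relations that are built into $\mathcal{Q}^{(i)}$:
\begin{itemize}
\item the terms $[\{(P_{si})\}]$ handle trails that return to the zero labels at $v_s$, $s<i$;
\item the terms $(m_j,[\{(P_{ji})\}])$ handle compatibility with the module $m_jR$.
\end{itemize}
This is essentially the path/longest-trail algorithm of \cite{DA}, and we would invoke it there rather than redo it.

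\emph{Step 2: minimality.} The preceding theorem shows that every element of $\hat{\mathcal F}_i$ has leading term divisible by $\mathcal{Q}^{(i)}$. Since Step~1 produces a flow-up class whose leading term is exactly $\mathcal{Q}^{(i)}$, this leading term generates the ideal $\mathrm{LT}(\hat{\mathcal F}_i)$, so $F^{(i)}$ is a minimal element of $\hat{\mathcal F}_i$.

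\emph{Step 3: spanning.} Let $F\in\hat R_G$ be arbitrary, and let $i$ be the smallest index with $f_{v_i}\neq 0$. The entries below position $i$ vanish, so $F\in\hat{\mathcal F}_i$. Hence $\mathcal{Q}^{(i)}\mid f_{v_i}$, say $f_{v_i}=c\,\mathcal{Q}^{(i)}$ with $c\in R$. Then $F-cF^{(i)}$ vanishes in positions $1,\dots,i$, so its first nonzero entry occurs at a strictly larger index. Inducting on this index, $F$ is an $R$-combination of $F^{(1)},\dots,F^{(n)}$.

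\emph{Step 4: independence and the determinant.} In the matrix convention of the paper, the columns $F^{(1)},\dots,F^{(n)}$ form a triangular matrix: $F^{(i)}$ has zeros in all rows corresponding to $v_s$ with $s<i$. Its determinant is therefore $\pm\prod_i f^{(i)}_{v_i}=\pm\prod_i\mathcal{Q}^{(i)}=\pm\hat Q_G$, where the sign comes from the bottom-to-top row ordering. This is nonzero because each $\mathcal{Q}^{(i)}$ is nonzero, so the $F^{(i)}$ are linearly independent and hence form a basis. Alternatively, once the determinant equals $\pm\hat Q_G$, Theorem~\ref{GCD basis} yields the basis property directly.

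The main obstacle is Step~1: proving that the pairwise CRT compatibility conditions hold simultaneously on graphs with cycles, where several trails reach the same vertex. We would first handle trees, where each vertex has a unique trail and the labelling proceeds edge by edge. We would then treat cycles by checking the closing congruence against the lcm over all trails $[\{(P_{ji})\}]$.
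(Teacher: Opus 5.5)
Your proposal matches how the paper obtains this corollary: existence of flow-up classes with leading term exactly $\mathcal{Q}^{(i)}$ rests on the CRT solvability asserted in the preceding theorem and on \cite{DA} (Theorem~6.14), minimality and spanning follow from that theorem's divisibility by leading-term reduction, and the determinant is the anti-triangular product $\pm\prod_i\mathcal{Q}^{(i)}=\pm\hat Q_G$. Keep the citation rather than your vertex-by-vertex sketch for Step~1, because an arbitrary CRT choice at one vertex can block a later one even on a tree: over $\mathbb{Z}$ with all $m_j=1$ and the path $v_2v_3v_4v_1$ with edge labels $2,3,3$, one has $\mathcal{Q}^{(2)}=1$, yet choosing $f_{v_3}=1$ leaves no admissible $f_{v_4}$ (one needs $f_{v_3}\equiv 3 \bmod 6$), so a self-contained argument must solve the whole system simultaneously, e.g.\ prime by prime.
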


\begin{Cor}\label{Q-divides}
	Let  $(G,\beta)$ be an edge-labeled graph.  
	Then $\hat{Q}_G$ is divisible by 
	$\big[ \{ (P_{si}) \}_{s<i} \big]   $.
\end{Cor}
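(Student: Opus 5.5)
Corollary~\ref{Q-divides} says that $\big[\{(P_{si})\}_{s<i}\big]$ divides $\hat{Q}_G$. The quickest route is through the factor $\mathcal{Q}^{(i)}$ of $\hat{Q}_G$, not through the flow-up basis of Corollary~\ref{det}. By definition,
\[
\mathcal{Q}^{(i)} = \left[ m_i,\ \left\{ \left( m_j,\ \big[ \{ (P_{ji}) \} \big] \right) \;\mid\; j > i \right\},\ \{\big[ \{ (P_{si}) \} \big] \;\mid\; s<i \} \right].
\]
This is a least common multiple, and each $\big[\{(P_{si})\}\big]$ with $s<i$ appears among its arguments. So each one divides $\mathcal{Q}^{(i)}$.

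Next, an lcm of a family is divisible by the lcm of any subfamily; this holds in any GCD domain. Hence
\[
\big[\{(P_{si})\}_{s<i}\big] = \Big[\,\big[\{(P_{si})\}\big] \;\Big|\; s<i\,\Big]
\]
divides $\mathcal{Q}^{(i)}$. Here I use associativity of the lcm to identify the lcm over all trails ending at $v_i$ from vertices $s<i$ with the lcm of the per-vertex lcms. Finally, $\mathcal{Q}^{(i)}$ is one factor of $\hat{Q}_G=\prod_{t=1}^n \mathcal{Q}^{(t)}$, so $\mathcal{Q}^{(i)}\mid \hat{Q}_G$ and the claim follows by transitivity.

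Over a PID there is an alternative consistent with Corollary~\ref{det}. The flow-up basis there is lower triangular, so $\pm\hat{Q}_G=\prod_i f^{(i)}_{v_i}$. Then the divisibility follows from the theorem preceding Corollary~\ref{det}, since $f^{(i)}_{v_i}$ is divisible by the displayed lcm containing $\big[\{(P_{si})\}_{s<i}\big]$. I would mention this only as a remark, because the direct argument above works over any GCD domain.

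The only genuine obstacle is interpreting the notation. I must check that $\big[\{(P_{si})\}_{s<i}\big]$ means the lcm over all trails from vertices $s<i$ to the fixed vertex $v_i$. It should not be read as a product over several $i$; that reading would require combining factors from different $\mathcal{Q}^{(i)}$, which I would handle by noting that the lcm of divisors of distinct factors still divides the product. Under either reading the argument reduces to ``an argument of an lcm divides it'' together with ``a factor divides the product''.
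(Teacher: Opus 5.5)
Your argument is correct, and it takes a different route from the one the paper implicitly intends.

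The paper gives no separate proof of Corollary~\ref{Q-divides}. It places the corollary immediately after the divisibility theorem for $f^{(i)}_{v_i}$ and Corollary~\ref{det}. So the intended derivation is the one you relegate to a remark:
\begin{itemize}
\item over a PID, the minimal flow-up basis has determinant $\pm\prod_i f^{(i)}_{v_i}=\pm\hat{Q}_G$, since the matrix is triangular up to reordering of rows;
\item each leading entry $f^{(i)}_{v_i}$ is divisible by an lcm having $\big[\{(P_{si})\}_{s<i}\big]$ among its arguments.
\end{itemize}

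Your main route bypasses the flow-up machinery. You read the divisibility straight off the definition $\hat{Q}_G=\prod_t\mathcal{Q}^{(t)}$, using only two facts: each argument of an lcm divides it, and lcms can be regrouped in a GCD domain.

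In Corollary~\ref{det} the leading entry is chosen to be exactly $\mathcal{Q}^{(i)}$, so the two routes bottom out in the same observation. What your version buys is that it shows the statement is purely definitional. It holds over any GCD domain, whether or not $\hat{R}_G$ is free or has a flow-up basis. The paper's placement instead ties it to the PID existence result of \cite{DA}.

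Your argument is also insensitive to which family of trails is meant. The definition somewhat ambiguously suggests either all trails or only the longest ones; either works, provided the same family is used in $\mathcal{Q}^{(i)}$ and in the statement.
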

\begin{Nt*}
	The expression $\big[ \{ (P_{si}) \}_{s<i} \big]$ corresponds to the least common multiple of the greatest common divisors of the zero trails 
	defined in~\cite{AS2021}.  
	In the construction of $\hat{Q}_G$, each vertex module is taken to be 
	$M_v = m_v R$, and each edge $uv$ is labeled with the module 
	$M_{uv} = R / r_{uv} R$, equipped with the natural 
	quotient homomorphism $\varphi_u : M_u \to M_{uv}$.  
	
	Now, consider the specialization where all vertex labels are set to be $m_i = 1$
	so that $M_v = R$ for all vertices $v$.  
	Under this setting, each map $\varphi_u$ reduces to the canonical quotient map
	\[
	\varphi_u : R \longrightarrow R / \beta(uv),
	\]
	where $\beta$ denotes the edge-labeling.  
	With this specialization, our extending generalized spline framework 
	coincides exactly with the classical generalized spline setting studied 
	in~\cite{AS2021}.  
	Consequently, it follows that
	$
	Q_G = \hat{Q}_G.$
\end{Nt*}

\begin{Lemma}
	Let $(G,\beta)$ be an edge-labeled graph and $Q_G$ defined as in 
	Altınok and Sarıoğlan~\cite{AS2021}.  
	Then we have $\hat{Q}_G= H \cdot Q_G$
	where
	\[H= \prod_{i =1}^{n}\frac{\bigg[ m_i,\ \left\{ \left( m_j,\ \big[ \{ (P_{ji}) \} \big] \right) \;\mid\; j> i \right\}  \bigg] }{\Bigg(\bigg[ m_i,\ \left\{ \left( m_j,\ \big[ \{ (P_{ji}) \} \big] \right) \;\mid\; j > i \right\}  \bigg] ,\bigg [ \{ (P_{si}) \}_{s<i} \bigg] \Bigg) }.
	\]
\end{Lemma}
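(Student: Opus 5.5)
The plan is a factor-by-factor comparison. By definition, $\hat{Q}_G=\prod_{i=1}^n \mathcal{Q}^{(i)}$. I would write each factor as
\[
\mathcal{Q}^{(i)}=\bigl[X_i,\,Y_i\bigr],
\qquad
X_i=\Bigl[ m_i,\ \bigl\{ ( m_j,\ [ \{ (P_{ji}) \} ] ) \mid j> i \bigr\} \Bigr],
\qquad
Y_i=\bigl[ \{ (P_{si}) \}_{s<i} \bigr].
\]
This uses the associativity of least common multiples in a GCD domain to group the last family of arguments into a single lcm.

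The second step is to invoke the Note preceding the lemma. Specializing all $m_i=1$ recovers the generalized spline setting of \cite{AS2021}, where $Q_G=\prod_i Y_i$. In that case every $X_i$ becomes $[1,\{(1,\cdot)\}]=1$, and $\mathcal{Q}^{(i)}$ reduces to $Y_i$. So I take $Q_G=\prod_{i=1}^n Y_i$ as the identification of the Altınok--Sarıoğlan key element. This uses the paper's convention that the zero-trail lcm for $i=1$, over an empty index set, is $1$.

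The core step is the elementary GCD-domain identity $[X,Y]=\dfrac{XY}{(X,Y)}$, valid up to units for nonzero $X,Y$. Applying it to each factor gives
\[
\mathcal{Q}^{(i)}=Y_i\cdot\frac{X_i}{(X_i,Y_i)},
\]
and $X_i/(X_i,Y_i)\in R$ because $(X_i,Y_i)\mid X_i$. Taking the product over $i$ yields
\[
\hat{Q}_G=\Bigl(\prod_i Y_i\Bigr)\prod_i\frac{X_i}{(X_i,Y_i)}=Q_G\cdot H,
\]
with $H$ exactly the displayed expression. Corollary~\ref{Q-divides} is consistent with this, since it already says $Y_i\mid \hat{Q}_G$.

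The main obstacle is making the identification $Q_G=\prod_i Y_i$ rigorous, since it relies on \cite{AS2021}'s definition of $Q_G$ as a product of zero-trail lcm's, one per vertex. I would justify it through the Note: specialize $m_i=1$ and check that each $X_i$ collapses to $1$. A second point to handle is nonvanishing: I would assume the edge labels are nonzero, or treat zero lcm's separately, so that the division by $(X_i,Y_i)$ is legitimate. Throughout, equalities are understood up to units, as gcds and lcms are in a GCD domain.
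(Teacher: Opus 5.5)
Your proof is correct and follows the paper's route. The paper just says the result follows from Lemma~\ref{lcmgcd}. The identity you use, $[X_i,Y_i]=X_iY_i/(X_i,Y_i)$, is exactly the $n=2$ case of part (4) of that lemma, applied factor by factor after identifying $Q_G=\prod_i Y_i$ via the preceding Note.
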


\begin{proof}
	The result follows directly from Lemma~\ref{lcmgcd}.
\end{proof}

We now prove a main theorem over a PID \(R\).

\begin{Th}
	Let  \((G, \beta)\) be an edge-labeled graph and \(F_1, F_2, \ldots, F_n \in \hat{R}_{G}\) splines. 
	Then the following statements are equivalent:
	\begin{enumerate}
		\item The set \(\{F_1, F_2, \dots, F_n\}\) forms an \(R\)-module basis of \(\hat{R}_{G}\).
		\item The determinant satisfies \(\lvert F_1, F_2, \ldots, F_n \rvert = \pm \hat{Q}_{G}\).
	\end{enumerate}
\end{Th}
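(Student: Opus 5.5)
The plan is to prove the two implications separately, using the flow-up basis of Corollary~\ref{det} as a reference basis for the direction (1)$\Rightarrow$(2).

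For (2)$\Rightarrow$(1), nothing beyond the GCD case is needed. A PID is a GCD domain, and the units of $R$ include $\pm 1$. So if $\lvert F_1,\ldots,F_n\rvert=\pm\hat{Q}_G$, Theorem~\ref{GCD basis} applies directly with $u=\pm1$ and gives that $\{F_1,\ldots,F_n\}$ is an $R$-module basis of $\hat{R}_G$.

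For (1)$\Rightarrow$(2), assume $\{F_1,\ldots,F_n\}$ is a basis of $\hat{R}_G$.
\begin{enumerate}
\item By Corollary~\ref{det}, since $R$ is a PID, there is a flow-up basis $\{F^{(1)},\ldots,F^{(n)}\}$ with $\lvert F^{(1)},\ldots,F^{(n)}\rvert=\pm\hat{Q}_G$. Its leading entries are the elements $\mathcal{Q}^{(i)}$, so the matrix is triangular in the paper's ordering and the determinant is $\pm\prod_i\mathcal{Q}^{(i)}$.
\item Corollary~\ref{twobasis}, applied to these two bases, gives $\lvert F_1,\ldots,F_n\rvert=u\,\lvert F^{(1)},\ldots,F^{(n)}\rvert=\pm u\,\hat{Q}_G$ for a unit $u$. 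If one prefers not to rely on the corollary, it follows from Lemma~\ref{combination} applied in both directions: each determinant divides the other, so they are associates, and $\hat{Q}_G\neq 0$.
\item Conclude that $\lvert F_1,\ldots,F_n\rvert$ equals $\hat{Q}_G$ up to a unit.
\end{enumerate}

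The main obstacle is the literal form ``$\pm\hat{Q}_G$'' in (2). In a general PID the argument above only yields $u\hat{Q}_G$ with $u$ a unit; for example, over $\mathbb{Q}[x,y]$-type rings one gets the factor $2$, as in the earlier example. I would therefore read $\pm$ as ``up to a unit of $R$'', which is exactly $\pm$ when $R=\mathbb{Z}$. Under that reading, (2)$\Rightarrow$(1) still goes through verbatim via Theorem~\ref{GCD basis}.

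The only other point to verify is that the leading entries of the flow-up classes in Corollary~\ref{det} are exactly the $\mathcal{Q}^{(i)}$. This needs the existence part of that corollary, namely that the divisibility bound of the preceding theorem is attained by CRT (Theorem~\ref{CRT}), which I take as given from the paper.
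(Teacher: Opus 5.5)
Your proposal is correct and is essentially the paper's proof: (2)$\Rightarrow$(1) is Theorem~\ref{GCD basis} with $u=\pm1$, and (1)$\Rightarrow$(2) compares your basis with the minimal flow-up basis of Corollary~\ref{det} via Corollary~\ref{twobasis}.

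Your reading of ``$\pm$'' as ``up to a unit'' is the right one. The paper's ``in particular, by sign'' is literal only when the units of $R$ are $\pm1$, e.g.\ $R=\mathbb{Z}$; over $\mathbb{Q}[x]$ the basis $\{2F^{(1)},F^{(2)},\dots,F^{(n)}\}$ has determinant $\pm2\hat{Q}_G$. In the PID case you can also get (2)$\Rightarrow$(1) without Theorem~\ref{GCD basis}, whose printed argument conflates $F$ with $F_1$ in the Cramer step: writing $F_j=\sum_k c_{kj}F^{(k)}$ gives $\lvert F_1,\dots,F_n\rvert=\pm\hat{Q}_G\det(c_{kj})$, so $\det(c_{kj})$ is a unit and $(c_{kj})$ is invertible.
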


\begin{proof}
	The forward direction has been established in Theorem~\ref{GCD basis}.  
	We now prove the converse.  
	Assume that \(\{F_1,F_2, \dots, F_n\}\) is a module basis of \(\hat{R}_G\).  
	By Corallarly ~\ref{det} (see Theroem~6.14 in \cite{DA}), flow-up splines  
	\(\{F^{(1)}, F^{(2)}, \dots, F^{(n)}\}\), where each \(F^{(i)} \in \hat{\mathcal{F}}_i\) is minimal,  
	exist and form a basis such that  
	\(\lvert F^{(1)}, F^{(2)}, \dots, F^{(n)} \rvert =  \pm \hat{Q}_{G}.\)  
	Then, by Corollary~\ref{twobasis}, which states that the determinants of two bases differ at most by a unit (in particular, by sign), it follows that  
	\[
	\lvert F_1, F_2, \dots , F_n \rvert = \pm \lvert F^{(1)}, F^{(2)}, \dots, F^{(n)} \rvert =  \pm \hat{Q}_{G}
	\]
	
\end{proof}

\end{document}